\newtheorem{theorem}{Theorem}[section]
\newtheorem{lemma}[theorem]{Lemma}
\newtheorem{proposition}[theorem]{Proposition}
\newtheorem{conjecture}[theorem]{Conjecture}
\newtheorem{cor}[theorem]{Corollary}
\theoremstyle{definition}
\newtheorem{definition}[theorem]{Definition}
\newtheorem{notation}[theorem]{Notation}
\newtheorem{remark}[theorem]{Remark}
\newtheorem{example}[theorem]{Example}
\newenvironment{theorem*}{\par\noindent\textbf{Box Theorem}\it}{\par\medskip}
\DeclareMathOperator{\ord}{ord}
\DeclareMathOperator{\corank}{corank}
\DeclareMathOperator{\Hom}{Hom}
\DeclareMathOperator{\im}{im}
\newcommand{\kk}{\mathsf k}
\newcommand{\ello}{l}
\newcommand{\kt}{\kk\llbracket t\rrbracket}
\title{Jordan Type stratification of spaces of commuting nilpotent matrices}
\author{Mats Boij}
\address{Department of Mathematics, KTH Royal Institute of Technology, SE-100 44 Stockholm, Sweden}\email{boij@kth.se}
\author{Anthony Iarrobino}
\address{Department of Mathematics, Northeastern University, Boston, MA 02115, USA}
\email{a.iarrobino@neu.edu}
\author{Leila Khatami}
\address{Department of Mathematics, Union College, Schenectady, NY 12308, USA}
\email{khatamil@union.edu}
\date{\today}
\keywords{Jordan type, commuting nilpotent matrices, parameter space, stratification, Burge code, partitions}
\subjclass[2020]{Primary: 15A27; Secondary: 05A17, 13E10, 14A05, 15A20}
\begin{document}
\begin{abstract}
    An $n\times n$ nilpotent matrix $B$ is determined up to conjugacy by a partition $P_B$ of $n$, its \emph{Jordan type} given by the sizes of its Jordan blocks. The Jordan type $\mathfrak D(P)$ of a nilpotent matrix in the dense orbit  of the nilpotent commutator of a given nilpotent matrix of Jordan type $P$ is \emph{stable} - has parts differing pairwise by at least two - and was determined by R. Basili. The second two authors, with B. Van Steirteghem and R. Zhao determined a rectangular table of partitions $\mathfrak D^{-1}(Q)$ having a given stable partition $Q$ as the Jordan type of its maximum nilpotent commutator. They proposed a box conjecture, that would generalize the answer to stable partitions $Q$ having $\ell$ parts: it was proven recently by J.~Irving, T. Ko\v{s}ir and M. Mastnak.\par
    Using this result and also some tropical calculations, the authors here determine equations defining the loci of each partition in $\mathfrak D^{-1}(Q)$, when $Q$ is stable with two parts. The equations for each locus form a  complete intersection. The  authors propose a conjecture generalizing their result to arbitrary stable $Q$.
    
\end{abstract}

\maketitle
\tableofcontents

\section{Introduction}
We let $\kk$ be an infinite field, and let $B$ be an $n\times n$ nilpotent matrix over $\kk$. The \emph{Jordan type} $P_B$ of $B$ is the partition of $n$ determined by the sizes of the blocks in a Jordan block matrix conjugate to $B$.  Although the Jordan decomposition has been known for over a hundred years \cite{Jo,Br} the problem of determining which pairs of partitions $(P,P')$ are possible for two commuting nilpotent matrices $B,B'$ did not receive attention, until in 2008 three research groups - in Ljubljana, Moscow, and Perugia-Boston became interested independently, and wrote about the problem \cite{Ob,Pan,BaI}.\vskip 0.2cm
Since the variety $\mathcal N_B$ parametrizing nilpotent matrices commuting with $B$ is irreducible, there is a maximum Jordan partition $\mathfrak D(P)$ in the dominance orbit closure order of a matrix $A\in \mathcal N_B$. R.~Basili had determined the number of parts $\ell(\mathfrak D(P))$ of $\mathfrak D(P)$, to be the minimum number of almost rectangular partitions (largest and smallest part differ by at most one) needed to cover $P$ \cite{Ba1}. P. Oblak determined the largest part of $\mathfrak D(P)$, and also made a recursive conjecture for determining $\mathfrak D(P)$ \cite{Ob}.  This was studied in several subsequent papers \cite{KO,Pan, Kh1,BaIK,IK}; in particular $\mathfrak D(P)$ was shown by T. Ko\v{s}ir and P. Oblak in \cite{KO} to be stable $\mathfrak D(\mathfrak D(P))=\mathfrak D(P)$; equivalently, $\mathfrak D(P)$ has parts differing pairwise by at least two \cite[Theorem 1]{BaI},\cite{Pan}, known as super-distinct.  The third author determined the smallest part of $\mathfrak D(P)$ \cite{Kh2}; thus, in 2014 $\mathfrak D(P)$ was known, given $P$, when $\mathfrak D(P)$ had up to three parts.  Finally, in 2022, R. Basili proved the Oblak conjecture \cite{Ba2}; a new proof using the Burge code is given in \cite[Theorem 34]{IKM}.  For an account of these developments see the survey by L. Khatami \cite{Kh3}. For a broader survey of problems in commuting nilpotent matrices see \cite{NSi, JeSi}.\par
But what is the set $\mathfrak D^{-1}(Q)$ for $Q$ a stable partition? R. Zhao conjectured in 2014 that for $Q=(u,u-r), r\ge 2$ there should be a rectangular $(r-1)\times (u-r)$ table $\mathcal T(Q)$ of partitions forming the set $\mathfrak D^{-1}(Q)$, and that the element $P_{i,j}(Q)$ in the $i$-row, $j$-column of $\mathcal T(Q)$ should have $(i+j)$ parts.  This was proven by the second two authors with B. Van Steirteghem and R. Zhao in \cite{IKVZ2}. They also proposed a \emph{Box Conjecture} \cite[Conjecture 4.11]{IKVZ2}, generalizing their result to all stable partitions $Q$. The Box Conjecture was recently proven in full generality in a striking paper by J. Irving, T. Ko{\v{s}}ir, and M. Mastnak~\cite{IKM}. We next state their result. 
We label the box as in the original conjecture - as that will be convenient for us.\par\noindent
We write the stable partition $Q=(q_1,q_2,\ldots,q_\ell)$ with parts $q_1>q_2>\cdots >q_\ell$ satisfying $q_i-q_{i+1}\ge 2$ for $1\le i\le \ell-1$ and define a key $S=(s_1,s_2,\ldots,s_\ell)$:
\begin{equation}
\begin{cases}
s_i&=q_i-q_{i+1}-1 \text{ for }1\le i<\ell\\
s_\ell&=q_\ell.
\end{cases}
\end{equation}
\begin{theorem*}\label{thm:box}
    \cite{IKM} Given the stable partition $Q$ with key $S$, there is an $s_1\times s_2\times \cdots \times s_\ell$ box $\mathfrak D^{-1}(Q)$
of partitions $P^Q_I, I=(i_1,\ldots, i_\ell), 1\le i_u\le s_u$ such that $P^Q_I$ has $|I|=\sum i_u$ parts, and is the partition whose Burge code (see Definition \ref{burgecodedef}) is
\begin{equation}\label{Burgegeneraleq}
\alpha^{s_\ell-i_\ell}\beta^{i_\ell}\alpha^{s_{\ell-1}-i_{\ell-1}+1}\beta^{i_{\ell-1}}\alpha^{s_{\ell-2}-i_{\ell-2}+1}\beta^{i_{\ell-2}}\ldots \alpha^{s_1-i_1}\beta^{i_1}\alpha.
    \end{equation}

\end{theorem*}

When $Q = (u,u-r)$ where $u>r\ge 2$ we will denote the entries in this $(r-1)\times (u-r)$ table of partitions by $P^Q_{k,\ello}$ where $1\le k\le r-1$ and $1\le \ello\le u-r$.  The Burge code for $P^Q_{k,\ello}$ is 
\begin{equation}\label{Burgespecialeq}
    \alpha^{u-r-\ello}\beta^{\ello}\alpha^{r-k}\beta^k\alpha.
\end{equation}

\begin{definition}
    For a stable partition $Q$, let $J_Q$ denote a Jordan matrix of Jordan type $Q$ and let $\mathcal N_{J_Q}$ denote the set of nilpotent matrices commuting with $J_Q$, i.e., 
    \[
    \mathcal N_{J_Q} = \{B\in M_{n,n}(\kk): B^n = 0 \text{ and } BJ_Q = J_Q B\}.
    \]
    Let 
    \[
    W^Q_P = \{B \in \mathcal N_{J_Q} : P_B = P\}
    \]
    where $Q = \mathfrak D(P)$ is the generic Jordan type commuting with $P$. Let $\displaystyle\mathcal W_Q = \bigcup_{P:\mathfrak D(P)=Q} W^Q_P$.\par
    When $Q=(u,u-r), r\ge 2$ we will denote by $W^Q_{k,\ello}=W^Q_P, P=P^Q_{k,\ello}$ of Equation~\eqref{Burgespecialeq}.
\end{definition}
\begin{remark}
   When $Q$ is a stable partition, the nilpotent commutator $\mathcal N_{J_Q}$ is a linear space.
   For any $Q$ we have that $W^Q_P$ and $\displaystyle\mathcal W_Q$ are locally closed algebraic subsets of $\mathcal N_{J_Q}$. 
\end{remark}
Our main result is the following theorem giving the equations for the loci $\overline W^Q_P, P\in \mathfrak D^{-1}(Q)$ when $Q$ is stable and has two parts.
\begin{theorem}\label{thm:main}
    When $Q = (u,u-r)$ is a stable partition and $\mathfrak D(P) = Q$, the closure of $W^Q_P$ in $\mathcal N_{J_Q}$ is an irreducible complete intersection of codimension $\ell(P)-2$. The closure of the locus $W^Q_P$ where $ P=P^Q_{k,\ello}$ of Equation \eqref{Burgespecialeq} is $X^Q_{k,\ello}$, defined by the equations $E^Q_{k,\ello}$ of Definition \ref{eqndef}.
\end{theorem}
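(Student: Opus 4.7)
The plan is to work in explicit affine coordinates on $\mathcal N_{J_Q}$ coming from the block structure of commuting matrices, and then read off the defining equations of each stratum $W^Q_{k,\ello}$ directly from its Burge code \eqref{Burgespecialeq}. For $Q=(u,u-r)$, any $B\in\mathcal N_{J_Q}$ has a $2\times 2$ block form in which each block is an (unshifted or shifted) upper-triangular Toeplitz block encoded by a polynomial; the coefficients of those four polynomials give my affine coordinates.

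First, I would translate the condition $P_B = P^Q_{k,\ello}$ into rank conditions on powers $B^m$ via the standard relation between $\corank B^m$ and the conjugate partition of $P_B$. Each rank condition is, a priori, a set of determinantal equations in the affine coordinates. Here is where tropical valuation bookkeeping enters: by tracking the leading orders in $t$ of matrix entries under multiplication (viewing blocks as polynomials over $\kt$), the relevant minors collapse to single polynomials in a few distinguished coefficients, and the pattern of $\alpha$'s and $\beta$'s in \eqref{Burgespecialeq} dictates exactly which coefficients must vanish. This step should produce the explicit system $E^Q_{k,\ello}$ and simultaneously give the set-theoretic inclusion $W^Q_{k,\ello}\subseteq X^Q_{k,\ello}$.

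Next, I would count equations and verify the complete intersection property. Since $\mathfrak D(P)=Q$ has two parts, the expected codimension of $W^Q_P$ in $\mathcal N_{J_Q}$ equals $\ell(P)-\ell(Q) = \ell(P)-2$, which is $k+\ello-2$ for $P=P^Q_{k,\ello}$. A Jacobian computation at a chosen smooth point of $W^Q_{k,\ello}$—ideally one at which the equations decouple by Burge-code layer—should show that the $k+\ello-2$ polynomials in $E^Q_{k,\ello}$ are algebraically independent, so $X^Q_{k,\ello}$ is a complete intersection of the predicted codimension.

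The main obstacle is promoting this to the full theorem: proving $\overline{W^Q_{k,\ello}}=X^Q_{k,\ello}$ and irreducibility. My strategy is threefold. First, the Box Theorem enumerates the full stratification of $\mathcal W_Q$ by $\mathfrak D^{-1}(Q)$, so every other stratum $W^Q_{P'}$ lying in $X^Q_{k,\ello}$ must come from a partition $P'$ that degenerates to $P^Q_{k,\ello}$. Second, comparing codimensions via the complete intersection property forces set-theoretic equality $\overline{W^Q_{k,\ello}} = X^Q_{k,\ello}$. Third, irreducibility comes either from an explicit rational parametrization of $W^Q_{k,\ello}$ built from the Burge word \eqref{Burgespecialeq}, or from exhibiting $W^Q_{k,\ello}$ as the image of an irreducible variety under a dominant morphism. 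The most delicate step is the last one; alternatively, Cohen--Macaulayness of a complete intersection together with generic reducedness at one smooth point gives primality of the ideal, thereby yielding both scheme-theoretic equality and irreducibility in one stroke.
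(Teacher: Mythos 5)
Your setup --- the block-Toeplitz coordinates, the passage to a $2\times2$ matrix $M_B$ over $\kt$, the translation of Jordan type into coranks of powers, and tropical bookkeeping of orders --- matches the paper's, and your count of $k+\ello-2$ equations is right. But the final step, where you promote the inclusion $W^Q_{k,\ello}\subseteq X^Q_{k,\ello}$ to equality of closures, has a genuine gap. Your argument is: (i) the Box Theorem enumerates the strata of $\mathcal W_Q$, so any other stratum inside $X^Q_{k,\ello}$ degenerates to $P^Q_{k,\ello}$, and (ii) "comparing codimensions forces set-theoretic equality." Neither piece works as stated. For (i), $X^Q_{k,\ello}$ is not contained in $\mathcal W_Q$: it contains matrices $B$ with $\mathfrak D(P_B)\ne Q$ (the paper's Example \ref{52ex} exhibits loci $W^Q_P$ with $P\notin\mathfrak D^{-1}(Q)$ meeting the table loci, and such loci can even be reducible), so the Box Theorem does not control all strata lying in $X^Q_{k,\ello}$. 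For (ii), the inclusion only gives $\dim W^Q_{k,\ello}\le\dim X^Q_{k,\ello}$; to conclude equality of closures you must independently show $W^Q_{k,\ello}$ is dense in $X^Q_{k,\ello}$, i.e.\ that the \emph{generic} point of $X^Q_{k,\ello}$ has Jordan type exactly $P^Q_{k,\ello}$ --- and that is precisely the nontrivial content, not a consequence of the complete-intersection property. (Your fallback via Cohen--Macaulayness and generic reducedness only re-proves that $X^Q_{k,\ello}$ is integral; it says nothing about which stratum its generic point belongs to.)

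The paper closes this gap in two steps you would need to supply. First, for the last column $\ello=u-r$ it computes, via the tropical power formulas (Lemma \ref{tropicalpowerlemma}) and the corank formula (Lemma \ref{tropicalcoranklemma}), the full sequence $\corank B^s$ for $B$ generic in $X^Q_{k,u-r}$, in four cases depending on $2k\lessgtr r$ and on $\min\{u-r,\,r-k\}$, and reads off $P_B=P^Q_{k,u-r}$ from the conjugate partition (Proposition \ref{prop:generic}). Second, for $\ello<u-r$ it inducts on $u-r$ using the restriction map $\Phi:\mathcal N_{J_{(u+1,u+1-r)}}\to\mathcal N_{J_{(u,u-r)}}$, which has four-dimensional affine fibers and satisfies $\Phi^{-1}(W^{(u,u-r)}_{k,\ello})=W^{(u+1,u+1-r)}_{k,\ello}$ by the Burge-code compatibility $\partial P^{(u+1,u+1-r)}_{k,\ello}=P^{(u,u-r)}_{k,\ello}$ from \cite[Theorem 23]{IKM}; the dimension count then matches $\dim X^{(u+1,u+1-r)}_{k,\ello}$ and irreducibility of $X$ finishes the argument. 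Relatedly, your claim that tropical bookkeeping "simultaneously" yields the inclusion $W\subseteq X$ glosses over cancellation of leading terms (tropical orders are only lower bounds); the paper instead proves the inclusion by the same $\partial$-induction. Finally, for the complete-intersection claim, a Jacobian computation at one smooth point does not rule out extra components of smaller codimension; the paper's device of solving successively for $b_{r-k},\dots$ on the open set $a_k\ne0$ (and symmetrically on $b_{r-k}\ne0$), and checking the complement has higher codimension, is what actually delivers irreducibility, the codimension, and radicality at once.
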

We develop in the next sections the tools needed. First is the Burge code for a partition $P$, following \cite{IKM}; we interpret it using almost rectangular subpartitions of $P$. The equations $E^Q_{k,\ello}$ we define (Definition \ref{eqndef}) are complete intersections (Proposition \ref{prop:ci}). The Jordan type of a matrix $B\in \mathcal N(Q)$ is determined by the set of ranks of powers of $B$: in Section \ref{tropicalsec} we develop tropical calculations to determine these ranks. We show that the locus $W^Q_{k,\ello}$ is included in the equation locus $X^Q_{k,\ello}$ (Proposition \ref{prop:inclusion}); we then show that the equations we give for the entries in the last column of $\mathcal T(Q)$ determine the partitions in the last column (Proposition \ref{prop:generic}). We complete the proof of Theorem \ref{thm:main} for columns $\ell<(u-r)$ by an induction on $u-r$. It is the Burge code and in particular the box-conjecture result comparing the code for $P$ and for $\partial P$ (\cite[Theorem 23]{IKM}) that allows the key induction argument.
\vskip 0.2cm Our work here leaves open the question of whether the partition in the Table Theorem~1.1 of \cite{IKVZ2} in position $(k,\ello)$ of $\mathcal T(Q)$ is the same as the partition $P^Q_{k,\ello}$ that we give here in terms of its Burge code in Equation \eqref{Burgespecialeq}. In fact, a careful study of the loci of the equations $X^Q_{k,\ello}$ confirms that the equation also determines the partition in the $(k,\ello)$  position for the Table Theorem of \cite{IKVZ2} (Proposition \ref{equivprop} below): due to the length of the verification, and potentially limited interest, we have not included the proof here, but have posted it on arXiv \cite{BIK}. 
\vskip 0.2cm
We give in Section \ref{intersectsec} an example illustrating the question of intersection of the closures of table loci in $\mathfrak D^{-1}(Q).$
In Section \ref{generalsec} we conjecture a generalization of Theorem \ref{thm:main} to stable $Q$ having an arbitrary number of parts.
\section{Notations and background}

A major tool in \cite{IKM} is the \emph{Burge code} of a partition \cite{Bu1,Bu2} This gives a bijection between the set of integer partitions and the set of finite binary words. In order to define this bijection, we first recall the basic definitions given in \cite{IKM} for sequences of finitely supported sequences of nonnegative integers. For a partition $P = (\lambda_1,\lambda_2,\dots,\lambda_s)$ the \emph{frequency sequence} is given by $f_i = \#\{j: \lambda_j=i\}$, for $i\ge 1$, and it is denoted by $\mathcal{F}(P)$. Let $\mathcal{F}$ denote the set of all finitely supported sequences of nonnegative integers. For $f=(f_1,f_2, \dots)\in \mathcal{F}$, the {\it backward pairs} of $f$ are defined as pairs $(f_{j-1},f_j)$ that are obtained by parsing $f$ from right to left and pairing $f_{j-1}, f_j$ whenever $f_j>0$. Suppose that backwards pairs of $f$ are $(f_{j_i-1},f_{j_i})$ for $i=1, \dots m$. The set $\mathsf{R}(f)$ is defined as $\{j_1,j_2,\dots,j_m\}.$ This set is partitioned by the following subsets. 
\[
\begin{array}{cc}
   \mathcal{A}  & = \{f\in \mathcal{F}: 1\not \in \mathsf{R}(f)\},\\
   \mathcal{B}  & = \{f\in \mathcal{F}: 1 \in \mathsf{R}(f)\}. 
\end{array}
\]
Finally, the mapping $\partial: \mathcal{F} \to \mathcal{F}$ is defined as follows. For $f\in \mathcal{F}$, $\partial{f}$ is obtained from $f$ by replacing $(f_{j-1},f_j)$ with $(f_{j-1}+1,f_j-1)$ for each $j\in \mathsf{R}(f)$.

For a partition $P$, we denote $\mathsf{R}(\mathcal{F}(P))$ by $\mathsf{R}(P)$ and $\partial \mathcal{F}(P)$ by $\partial P$. We also say $P$ is in $\mathcal{A}$ (respectively, $\mathcal{B}$)) if $\mathcal{F}(P)\in \mathcal{A}$ (respectively, $\mathcal{B}$). 

Recall that a partition is called {\it almost rectangular} if its biggest and smallest parts differ by at most 1. We use the notation $([m]^k)$ to denote the almost rectangular partition of $m$ into $k$ parts when $m\ge k$. Consider a partition $P$ with frequency sequence $\mathcal{F}(P)=(f_1, f_2,\dots)$. Then $f_j>0$ if and only if $P$ includes at least one part of size $j$. Therefore, backward pairs of $P$ can be obtained by decomposing $P$ into (maximal) almost rectangular subpartitions of $P$ starting from the top (from largest part to smallest). The set $\mathsf{R}(P)$ then consists of the largest part of each almost rectangular subpartition in such a decomposition. Visualizing $P$ by its Ferrers diagram in Figure~\ref{fig:partial} we note that in order to obtain $\partial P$, for each almost rectangular subpartition of $P$ described above, we remove one box from the lowest row corresponding to its largest part.  

\begin{example}
    Let $P=(7,6,5^3,2^2,1)$ with $f=\mathcal{F}(P)=(1,2,0,0,3,1,1)$. Looking at the frequency sequence, the backward pairs are $(1,1)$, $(0,3)$, and  $(1,2)$. Alternatively, looking at the partition, starting form the top decomposing $P$ into almost rectangular subpartitions, we get $(7,6)$, $(5^3)$ and $(2^2,1)$. We have $\mathsf{R}(P)=\{7, 5, 2\}$. We note that in this case $P\in \mathcal{A}$ since $1\notin \mathsf{R}(P)$.

    Figure~\ref{fig:partial} illustrates the process of obtaining $\partial P$, as well as $\partial^2P$. We note the change in the almost rectangular decomposition going from $P$ to $\partial P$. As it is evident from the figure, $P$ and $\partial P$ are both in $\mathcal{A}$, while $\partial^2 P$ is in $\mathcal{B}$ since $1\in \mathsf{R}(\partial^2 P)$. We also note that $\partial$ decreases the length (the number of parts) of the partition by one when the partition is in $\mathcal{B}$, and preserves the length when the partition is in $\mathcal A.$
\end{example}

\begin{definition}\label{burgecodedef}
    The Burge code for a partition $P$ is the binary word $w_1w_2\cdots w_m$ where 
\[
w_i=\begin{cases} 
\alpha,& \text{if $P_i=\partial^{i-1}P\in \mathcal A$},\\
\beta,& \text{if $P_i=\partial^{i-1}P\in \mathcal B$}.
\end{cases}
\]
and $m = \min\{i:\partial^{i-1}P = (0)\}$.
\end{definition} 

\begin{figure}\label{figure:partial}
\centering
\begin{tikzpicture}[scale=0.75]
    \foreach \i in {1,...,6}{
    \draw (\i/2,4)--(\i/2,3);
    \draw (0,3.5)--(3,3.5);
    \draw [line width=0.5mm] (0,3)--(0,4)--(3.5,4)--(3.5,3.5)--(3,3.5)--(3,3)--(0,3);
    \draw [fill=lightgray] (3,3.5) rectangle (3.5,4);
    }    

    \foreach \i in {1,...,5}{
    \draw (\i/2,3)--(\i/2,1.5);
    \draw (0,2.5)--(2.5,2.5);
    \draw (0,2)--(2.5,2);
    \draw [line width=0.5mm] (0,3)--(0,1.5)--(2.5,1.5)--(2.5,3)--(0,3);
    \draw [fill=lightgray] (2,1.5) rectangle (2.5,2);
    }    

  \foreach \i in {1}{
    \draw (\i/2,0.5)--(\i/2,1.5);
    \draw (0,1)--(1,1);
    \draw (0,0.5)--(0.5,0.5);
    \draw [line width=0.5mm] (0,1.5)--(0,0)--(0.5,0)--(0.5,0.5)--(1,0.5)--(1,1.5)--(0,1.5);
    \draw [fill=lightgray] (0.5,0.5) rectangle (1,1);
    }  
    \draw node at (1.5,-1) {$P=(7,6,5^3,2^2,1)$};
    \foreach \i in {1,...,6}{
    \draw (5+\i/2,4)--(5+\i/2,3);
}
\foreach \i in {1,...,4}{
    \draw (5+\i/2,3)--(5+\i/2,2);
}
    \draw (5,3.5)--(5+3,3.5);
    \draw (5,3)--(5+3,3);
    \draw (5,2.5)--(5+2.5,2.5);
    \draw (5,2)--(5+2,2);
    \draw [line width=0.5mm] (5,4)--(5,2)--(7.5,2)--(7.5,2)--(7.5,3)--(8,3)--(8,4)--(5,4);
    \draw [fill=lightgray] (7.5,3) rectangle (8,3.5);
\foreach \i in {1,...,4}{
    \draw (5+\i/2,2)--(5+\i/2,1.5);
}
    \draw [line width=0.5mm] (5,2)--(5,1.5)--(7,1.5)--(7,2);
    \draw [fill=lightgray] (6.5,1.5) rectangle (7,2);

    \draw [line width=0.5mm] (5,1.5)--(5,0)--(5.5,0)--(5.5,1)--(6,1)--(6,1.5);
    \draw (5.5,1.5)--(5.5,1);
    \draw (5,1)--(5.5,1);
    \draw (5,0.5)--(5.5,0.5);
    \draw [fill=lightgray] (5.5,1) rectangle (6,1.5);
    \draw node at (6.5,-1) {$\partial P=(6^2,5^2,4,2,1^2)$};

 \foreach \i in {1,...,5}{
    \draw (10+\i/2,4)--(10+\i/2,3);
}
\foreach \i in {1,...,4}{
    \draw (10+\i/2,3)--(10+\i/2,2);
}
    \draw (10,3.5)--(10+3,3.5);
    \draw (10,3)--(10+2.5,3);
    \draw (10,2.5)--(10+2.5,2.5);
    \draw (10,2)--(10+1.5,2);
    \draw [line width=0.5mm] (10,4)--(10,2)--(7.5+5,2)--(7.5+5,2.5)--(7.5+5,3.5)--(8+5,3.5)--(8+5,4)--(5+5,4);
    \draw [fill=lightgray] (7.5+5,3.5) rectangle (8+5,4);
\foreach \i in {1,...,3}{
    \draw (10+\i/2,2)--(10+\i/2,1.5);
}
    \draw [line width=0.5mm] (5+5,2)--(5+5,1.5)--(6.5+5,1.5)--(6.5+5,2);
   \draw [fill=lightgray] (11,1.5) rectangle (11.5,2);

    \draw [line width=0.5mm] (5+5,1.5)--(5+5,0)--(5.5+5,0)--(5.5+5,1.5);
    \draw (5.5+5,1.5)--(5.5+5,1);
    \draw (5+5,1)--(5.5+5,1);
    \draw (5+5,0.5)--(5.5+5,0.5);
    \draw [fill=lightgray] (10,0) rectangle (10.5,0.5);
    \draw node at (5+6.5,-1) {$\partial^2 P=(6,5^3,3,1^3)$};

 \foreach \i in {1,...,5}{
    \draw (15+\i/2,4)--(15+\i/2,3);
}
\foreach \i in {1,...,4}{
    \draw (15+\i/2,3)--(15+\i/2,2);
}
    \draw (15,3.5)--(15+2.5,3.5);
    \draw (15,3)--(15+2.5,3);
    \draw (15,2.5)--(15+2.5,2.5);
    \draw (15,2)--(15+1.5,2);
    \draw [line width=0.5mm] (15,4)--(15,2)--(7.5+5+5,2)--(7.5+5+5,2.5)--(7.5+5+5,4)--(5+5+5,4);
    \draw [fill=lightgray] (17,2) rectangle (17.5,2.5);
\foreach \i in {1,...,2}{
    \draw (10+5+\i/2,2)--(10+5+\i/2,1.5);
}
    \draw (15,1)--(15.5,1);
    \draw (15,1.5)--(15.5,1.5);
    \draw [line width=0.5mm] (5+5+5,2)--(5+5+5,0.5)--(5.5+5+5,0.5)--(5.5+5+5,1.5)--(6+10,1.5)--(6+10,2);
    \draw [fill=lightgray] (15.5,1.5) rectangle (16,2);
    
    \draw node at (5+6.5+5,-1) {$\partial^3 P=(5^4,2,1^2)$};
\end{tikzpicture}
    \caption{Finding $\partial P$, $\partial^2 P$, and $\partial^3 P$ for $P=(7,6,5^3,2^2,1)$. Each partition is decomposed into disjoint almost rectangular subpartitions, starting from the top. In the figure, these almost rectangular subpartitions are separated from each other by thick borders within each partition. 
    The map $\partial$ acts on a partition by removing one box from the lowest row of the largest part of each almost rectangular subpartition. Those boxes are shaded in gray for each partition.}

    \label{fig:partial}
\end{figure}
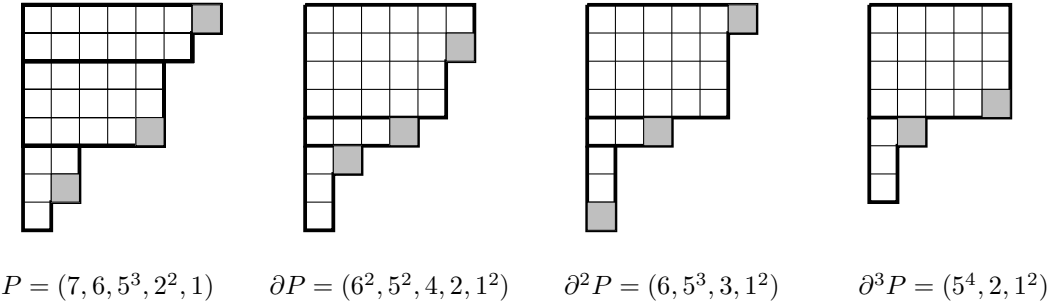

\begin{example}
    Let $P = (5,4,3,3,3,2,2,1) = (5,4)|(3,3,3,2,2)|(1)$. We get 
    \[
    \begin{array}{rcl}
    P_1 = \partial^0 P &=& (5,4)|(3,3,3,2,2)|(1)\\
    P_2 =\,\, \partial P &=&  (4,4,3,3)|(2,2,2)\\
    P_3 = \partial^2 P &=&  (4,3,3,3)|(2,2,1)\\
    P_4 = \partial^3 P &=&  (3,3,3,3,2)|(1,1)\\
    P_5 = \partial^4 P &=&  (3,3,3,2,2)|(1)\\
    P_6 = \partial^5 P &=&  (3,3,2,2,2)\\
    P_7 = \partial^6 P &=&  (3,2,2,2,2)\\
    P_8 = \partial^7 P &=&  (2,2,2,2,2)\\
    P_9 = \partial^8 P &=&  (2,2,2,2,1)\\
    \end{array}
    \begin{array}{rcl}
    P_{10} =\, \partial^9 P &=&  (2,2,2,1,1)\\
    P_{11} = \partial^{10} P &=&  (2,2,1,1,1)\\
    P_{12} = \partial^{11} P &=&  (2,1,1,1,1)\\
    P_{13} = \partial^{12} P &=&  (1,1,1,1,1)\\
    P_{14} = \partial^{13} P &=&  (1,1,1,1)\\
    P_{15} = \partial^{14} P &=&  (1,1,1)\\
    P_{16} = \partial^{15} P &=&  (1,1)\\
    P_{17} = \partial^{16} P &=&  (1)\\
    P_{18} = \partial^{17} P &=&  (0)\\
    \end{array}
    \]
    and the Burge code is $\beta\alpha\alpha\beta\beta\alpha\alpha\alpha\alpha\alpha\alpha\alpha\beta\beta\beta\beta\beta\alpha=\beta\alpha^2\beta^2\alpha^7\beta^5\alpha$. We get $\mathfrak D(P) = (17,5,1)$ since $P_1$, $P_5$ and $P_{17}$ are the last elements in $\mathcal B$ for each of the three consecutive sequences of $P_i$ in $\mathcal B$. We can see more generally that once $\partial^i P = ([m]^k) $ is an almost rectangular partition,  we get that $\partial^{i+j}P = \partial^j([m]^k) = ([m-j]^k)$, for $j=0,1,\dots,m-k$ and $\partial^{m-k}([m]^k)=([k]^k) =(1^k)$. Hence the Burge code for $([m]^k)$ is $\alpha^{m-k}\beta^k\alpha$, which explains the $\alpha^7\beta^5\alpha$ in the example above. 
\end{example}

When $Q = (u,u-r)$ with $u>r\ge 2$, we have that the matrices in $\mathcal N_{J_Q}$ can be written as 
\begin{equation}\label{B-matrix}
B = \begin{bmatrix}
    0&a_1&a_2&a_3&\cdots&a_r&a_{r+1}&\cdots&a_{u-1}&g_0&g_1&g_2&\dots&g_{u-r-1}\\
    0&0&a_1&a_2&\cdots&a_{r-1}&a_{r}&\cdots &a_{u-2}&0&g_0&g_1&\dots&g_{u-r-2}\\
    \vdots&\vdots&\vdots&\vdots&\ddots&\vdots&\vdots&\ddots&\vdots&\vdots&\vdots&\vdots&\ddots&\vdots \\
    0&0&0&0&\cdots&a_{2r-u+1}&a_{2r-u+2}&\cdot&a_{r}&0&0&0&\cdots&g_0\\
    \vdots&\vdots&\vdots&\vdots&\ddots&\vdots&\vdots&\vdots&\vdots&\ddots&\vdots \\
    0&0&0&0&\cdots&0&0&\cdots&a_1&0&0&0&\dots&0\\
    0&0&0&0&\cdots&0&0&\cdots&0&0&0&0&\dots&0\\
    0&0&0&0&\cdots&h_0&h_1&\cdots&h_{u-r-1}&0&b_1&b_2&\dots&b_{u-r-1}\\
    0&0&0&0&\cdots&0&h_0&\cdots&h_{u-r-2}&0&0&b_1&\dots&b_{u-r-2}\\
    \vdots&\vdots&\vdots&\vdots&\ddots&\vdots&\vdots&\ddots&\vdots&\vdots&\vdots&\vdots&\ddots&\vdots \\
    0&0&0&0&\cdots&0&0&\cdot&h_{0}&0&0&0&\cdots&0\\
\end{bmatrix}
\end{equation}
Using this we can get an affine coordinate ring of $\mathcal N_{J_Q}$ as \[\kk[\mathcal N_{J_Q}] = \kk[a_1,a_2,\dots,a_{u-1},b_1,b_2,\dots,b_{u-r-1},g_0,g_1,\dots,g_{u-r-1},h_0,h_1,\dots,h_{u-r-1}].\]

\begin{definition}\label{eqndef}
    For $Q = (u,u-r)$, where $u>r\ge 2$, and $1\le k\le r-1$ and $1\le \ello \le u-r$ let 
    \[
    E^Q_{k,\ello} = \{a_1,a_2,\dots,a_{k-1},b_1,b_2,\dots b_{\ello-1}\} \subseteq \kk[\mathcal N_{J_Q}]
    \]
    when $k+\ello\le r$; and let 
   \[\begin{split}
        E^Q_{k,\ello} &= \{a_1,a_2,\dots,a_{k-1},b_1,b_2,\dots b_{r-k-1},a_kb_{r-k}-g_0h_0, \\&a_kb_{r+1-k}+a_{k+1}b_{r-k}-g_0h_1-g_1h_0,\dots,\sum_{j=0}^{k+\ello-r-1}(a_{k+j}b_{\ello-1-j}-g_jh_{k+\ello-r-1-j})\}\subseteq \kk[\mathcal N_{J_Q}]
   \end{split}
    \]
    if $k+\ello\ge r+1$.    Let $I^Q_{k,\ello} = \left(E^Q_{k,\ello}\right)$ be the ideal generated by $E^Q_{k,\ello}$ in $\kk[\mathcal N_{J_Q}]$ and let $X^Q_{k,\ello} = V(I^Q_{k,\ello})$ be the variety defined by it.
\end{definition}

\begin{remark}\label{2.5rem}
    In the region where $k+l\le r$, there are only linear equations and we have the two inclusions $E^Q_{k-1,l}\subseteq E^Q_{k,l}$ and $E^Q_{k,l-1}\subseteq E^Q_{k,l}$. When $k+l>r$ we also have quadratic equations and we only have the inclusion $E^Q_{k,l-1}\subseteq E^Q_{k,l}$.
\end{remark}

\begin{proposition}\label{prop:ci}
    When $u>r\ge 2$, the variety $X^{(u,u-r)}_{k,\ello}$ is a reduced irreducible complete intersection of codimension $k+\ello-2$, for $1\le k\le r-1$ and $1\le \ello\le u-r$.
\end{proposition}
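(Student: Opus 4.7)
The argument will split on the sign of $k+\ell-r$. When $k+\ell\le r$ the generators of $E^Q_{k,\ell}$ are simply the $k+\ell-2$ distinct coordinate functions $a_1,\dots,a_{k-1},b_1,\dots,b_{\ell-1}$, so $X^Q_{k,\ell}$ is a coordinate linear subspace of $\mathcal N_{J_Q}$ and the claims are immediate. I will therefore concentrate on the case $k+\ell\ge r+1$ and set $N:=k+\ell-r$; the $r-2$ linear generators of $E^Q_{k,\ell}$ may be used to pass to the corresponding coordinate affine slice, and the remaining $N$ bilinear relations
\[
f_m=\sum_{j=0}^{m}a_{k+j}b_{r-k+m-j}-\sum_{j=0}^{m}g_jh_{m-j},\qquad 0\le m\le N-1,
\]
involve only the $4N$ variables $a_k,\dots,a_{k+N-1}$, $b_{r-k},\dots,b_{\ell-1}$, $g_0,\dots,g_{N-1}$, $h_0,\dots,h_{N-1}$. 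All other coordinates of $\mathcal N_{J_Q}$ split off as a trivial affine factor, so it suffices to show that the variety $V_N\subset\mathbb A^{4N}$ cut out by $f_0,\dots,f_{N-1}$ is a reduced irreducible complete intersection of codimension $N$. Conceptually the $f_m$ are the low-order coefficients of the identity $a(t)b(t)\equiv g(t)h(t)\pmod{t^N}$ for the generic power series $a(t)=\sum_{i\ge 0}a_{k+i}t^i$, etc.

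The plan is induction on $N$, with base case $N=1$ the Segre quadric $a_kb_{r-k}=g_0h_0$ in $\mathbb A^4$. For the inductive step, on the principal open set $U=\{a_k\ne 0\}\subset\mathbb A^{4N}$ the equations $f_0,f_1,\dots,f_{N-1}$ can be solved one at a time for $b_{r-k},b_{r-k+1},\dots,b_{\ell-1}$, starting from $b_{r-k}=g_0h_0/a_k$ coming from $f_0$ and proceeding inductively. This exhibits $V_N\cap U$ as the graph of a regular map over $U\times\mathbb A^{2N}$, so $V_N\cap U$ is smooth and irreducible, of codimension exactly $N$. The main obstacle is ruling out further components of $V_N$ contained in the closed locus $\{a_k=0\}$: there $f_0=-g_0h_0$, hence
\[
V_N\cap\{a_k=0\}=\bigl(V_N\cap\{a_k=g_0=0\}\bigr)\cup\bigl(V_N\cap\{a_k=h_0=0\}\bigr).
\]
After setting $a_k=g_0=0$ and reindexing $a'_i:=a_{k+1+i}$ and $g'_i:=g_{i+1}$, the surviving relations $f_1,\dots,f_{N-1}$ become exactly the $N-1$ defining relations of $V_{N-1}$ in the shifted variables, while $b_{\ell-1}$ and $h_{N-1}$ drop out and become free parameters. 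Hence $V_N\cap\{a_k=g_0=0\}\cong V_{N-1}\times\mathbb A^2$, of dimension $3(N-1)+2=3N-1$ by the inductive hypothesis; the symmetric argument handles $\{a_k=h_0=0\}$. Thus the boundary has dimension strictly less than the expected dimension $3N$ of $V_N$.

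Combining the pieces: Krull's height theorem forces every component of $V_N$ to have dimension at least $3N$, while the boundary analysis shows no component can lie inside $\{a_k=0\}$, so every component meets $U$. Therefore $V_N$ is the closure of the irreducible smooth subvariety $V_N\cap U$, hence irreducible of dimension exactly $3N$; consequently $(f_0,\dots,f_{N-1})$ has height $N$ in the Cohen--Macaulay polynomial ring and is a complete intersection; and $V_N$ is reduced because it is Cohen--Macaulay and generically smooth on $U$. The delicate technical point throughout is the reindexing that identifies each boundary component with a smaller instance of the same problem; once that is in place the dimension count, the irreducibility, and the conclusion that $X^Q_{k,\ell}$ is a reduced irreducible complete intersection of codimension $k+\ell-2$ all fall out uniformly.
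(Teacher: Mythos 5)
Your proposal is correct and follows essentially the same route as the paper: keep the linear generators aside, localize on $\{a_k\ne 0\}$, exhibit the quadric locus there as a graph by solving successively for $b_{r-k},\dots,b_{\ello-1}$, and check that the locus over $\{a_k=0\}$ has strictly smaller dimension so that no component hides there; irreducibility, the codimension count, the complete-intersection property, and reducedness then follow as you say. The one place you go beyond the paper is the boundary analysis: the paper simply uses the symmetric chart $\{b_{r-k}\ne 0\}$ and asserts that $\{a_k=b_{r-k}=0\}$ cuts out a locus of higher codimension, whereas you make this precise by splitting $\{a_k=0\}$ along $g_0h_0=0$ and identifying each piece with $V_{N-1}\times\mathbb A^2$ via the index shift, which is a clean and fully rigorous substitute for that assertion.
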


\begin{proof}
    For the linear equations, the statement is clear. When focusing on the quadratic equations
    \[
    \begin{array}{rcl}
    a_kb_{r-k} &= &g_0h_0 \\
    a_kb_{r+1-k}+a_{k+1}b_{r-k} & = & g_0h_1+g_1h_0\\
    &\vdots\\
    a_kb_{\ello-1}+a_{k+1}b_{\ello-2}+\cdots+a_{2k+\ello-1}b_{r-k} & = & g_0h_{k+\ello-r}+g_1h_{k+\ello-r-1}+\cdots +g_{k+\ello-r}h_0
    \end{array}
    \]
    we can successively solve for $b_{r-k},b_{r-k+1},\dots,b_{\ello-1}$ as long as $a_k\ne 0$ and we can solve for $a_k,a_{k+1},\dots,a_{2k-r+\ello-1}$ as long as $b_{r-k}\ne 0$.  If both are zero, we get a locus of higher codimension. Hence an open dense subset of $X^{(u,u-r)}_{k,\ello}$ can be parametrized by an affine space of dimension $4u-3r-2-(k+\ello-2) = 4u-3r-k-\ello$. This gives us the irreducibility and codimension. Since each variable that we solve for occurs only linearly in the equations, the ideal is radical and since the number of equations equals the codimension it is a complete intersection.
\end{proof}

\begin{remark}
    For $n\in \mathbb N$ we can consider the $2\times 2$-matrices with entries in $\kk[t]/(t^n)$ with vanishing determinant as the $(n-1)$-jet bundle of the variety of rank one matrices in $M_{2,2}(\kk)$. This has the same quadratic equations as $X^Q_{k,\ello}$ after a shift of indices and with $n=k+\ello-r$. We can see this as it parametrizes maps 
    \[
    \operatorname{Spec}\left(\kk[t]/(t^n)\right)\longrightarrow 
    \operatorname{Spec}\left(\kk[a,b,g,h]/(ab-gh)\right)
    \]
    where the right hand side is the affine cone over a smooth quadric surface in $\mathbb P^3_\kk$. 
\vskip 0.2cm\par In discussions with the second author, T. Ko\v{sir}, P. Oblak, and K. \v{S}ivic pointed out that the equations obtained had similarities with the work of \cite{KS,KS2}. One difference is that we consider several orders of vanishing for different elements of our matrix $M_B$ of Equation~\eqref{MBeqn}.\par
We thank the referee for pointing out that the equations for loci $X^Q_{k,\ello}$ we show here have structure with some similarity to those shown by M. Neubauer and B. A. Sethuraman for commuting pairs in the centralizer of a $2$-regular matrix (see \cite{NeS}). The difference in the
linear equations is that the equations for the varieties $X^Q_{k,\ello}$ are given by the parameters close to the
diagonal of the larger diagonal Toeplitz block, while in the case of \cite{NeS} they come from the parameters
in the upper-right corner of the Toeplitz block. The quadratic equations in both cases are equations
of jets over a determinantal variety. The difference is that, in the case of $X^Q_{k,\ello}$ these are jets over
rank one matrices of size $2 \times 2$, while in the case of \cite{NeS} these are jets over rank one matrices that are $3\times 2$. T. Ko\v{sir} and B. A. Sethuramam then studied jets over general determinantal varieties in \cite{KS,KS2}.
Further discussion of these jets are found in \cite{Do,Ma,Yu}, while computational software is given in \cite[\S 4]{GI}.
\end{remark}
For a matrix $B$ in $\mathcal N_{J_Q}$ we can associate to $B$, as in (\ref{B-matrix}), a matrix 
\begin{equation}\label{MBeqn}
M_B = \begin{bmatrix}
    a&g\\
    ht^r&b
\end{bmatrix} = \begin{bmatrix}
    a_1t+a_2t^2+\cdots+a_{u-1}t^{u-1}&g_0+g_1t+\cdots+g_{u-r-1}t^{u-r-1}\\
    h_0t^r+a_1t^{r+1}+\cdots+h_{u-r-1}t^{u-1}&g_1t+b_2t^2+\cdots+b_{u-r-1}t^{u-r-1}\\
\end{bmatrix}
\end{equation}
where $a,ht^r\in \kk[t]/(t^u)$ and $g,b\in \kk[t]/(t^{u-r})$. The matrices $B$ and $M_B$ represent the same $\kk$-linear map under the $\kk$-linear isomorphism $\kk^{u}\oplus\kk^{u-r}\longrightarrow \kk[t]/(t^u)\oplus\kk[t]/(t^{u-r})$. In particular, they have the same rank and corank and multiplication by $J_Q$ on $B$ corresponds to multiplication by $t$ on $M_B$.
The matrix $M_B$ also represents a homomorphism in \[
\Hom_{\kk[t]}(\kk[t]/(t^u)\oplus \kk[t]/(t^{u-r}),\kk[t]/(t^u)\oplus \kk[t]/(t^{u-r}))\] and the map 
\[
    \mathcal N_{J_Q} \longrightarrow \mathcal R_Q:=\Hom_{\kk[t]}(\kk[t]/(t^u)\oplus \kk[t]/(t^{u-r}),\kk[t]/(t^u)\oplus \kk[t]/(t^{u-r}))
\]
given by $B\mapsto M_B$ is an injective ring homomorphism.
We will use the power series ring $\kt$ for calculations and then restrict to the quotients by $(t^u)$ and $(t^{u-r})$.

In the endomorphism ring $\Hom_{\kt}(\kt^2,\kt^2)$, there is a subring given by 
\begin{equation}\label{eq:Hr}
    \mathcal H_r = \left\{\begin{bmatrix}
    a&g\\
    ht^r&b
\end{bmatrix}\colon a,b,g,h\in \kt\right\}
\end{equation}
and the natural map $\mathcal H_r \longrightarrow\mathcal R_Q$ is a surjective ring homomorphism. 

\section{Tropical calculations}\label{tropicalsec}

We will use the order of the elements in the matrices in the calculations of the coranks. Recall that for $a\in \kt$, the order of $a$ is the lowest power of $t$ in $a$ that has a nonzero coefficient. For a matrix $B$ we will have 
\[
T(M_B)  = \begin{bmatrix}
    \ord(a)&\ord(g)\\
    r+\ord(h)&\ord(b)
\end{bmatrix}
\]
Since $B$ is nilpotent, $\ord(a)\ge 1$ and $\ord(b)\ge 1$.

\begin{lemma}\label{lemma:det}
    If $M_B = \begin{bmatrix}
    a&g\\
    ht^r&b
\end{bmatrix}$, $a\ne 0$ and 
    $\ord(a)\le r+\min\{\ord (g),\ord(h)\}$, then \[
    \corank B=\min \{\ord(\det M_B),\ord(a)+u-r\}.
    \]
\end{lemma}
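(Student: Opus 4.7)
The plan is to compute $\corank(B)$ as $\dim_\kk \mathrm{coker}(M_B)$ acting on $V=\kt/(t^u)\oplus\kt/(t^{u-r})$, by Smith normal form over the DVR $\kt$. A small subtlety is that the physical action of $M_B\in\mathcal H_r$ on $V$ is not encoded by $M_B$ directly but by the conjugate matrix $\bigl(\begin{smallmatrix} a & gt^r\\ h & b\end{smallmatrix}\bigr)$ obtained by shifting the factor $t^r$ from the $(2,1)$-entry to the $(1,2)$-entry, using the isomorphism $\kt/(t^{u-r})\cong t^r\kt/(t^u)$ implicit in the map $\mathcal H_r\twoheadrightarrow \mathcal R_Q$; fortunately, this shift preserves the determinant $ab-ght^r=\det M_B$. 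Appending the generators $(t^u,0)$ and $(0,t^{u-r})$ of the defining submodule of $V$ gives the presentation matrix
\[
N=\begin{bmatrix} a & gt^r & t^u & 0\\ h & b & 0 & t^{u-r}\end{bmatrix},
\]
and $\corank(B)=\dim_\kk\mathrm{coker}(N)$.

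By the structure theorem for $\kt$-modules combined with Fitting's theorem, $\dim_\kk\mathrm{coker}(N)$ equals the minimum $t$-adic order of the six $2\times 2$ minors of $N$. Direct computation gives these minors and their orders: $\det M_B$ (order $\ord(\det M_B)$), $-ht^u$ (order $\ord(h)+u$), $at^{u-r}$ (order $\ord(a)+u-r$), $-bt^u$ (order $\ord(b)+u$), $gt^u$ (order $\ord(g)+u$), and $t^{2u-r}$ (order $2u-r$).

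It remains to show that the minimum of these six orders equals $\min\{\ord(\det M_B),\ord(a)+u-r\}$. The inequalities $\ord(h)+u\ge \ord(a)+u-r$ and $\ord(g)+u\ge \ord(a)+u-r$ follow directly from the hypothesis, and $2u-r>\ord(a)+u-r$ from $\ord(a)<u$ (since $a\ne 0$ in $\kt/(t^u)$). The delicate case is the minor $-bt^u$: if $\ord(b)\ge \ord(a)-r$, the bound is immediate; otherwise $\ord(b)<\ord(a)-r$, and combined with $\ord(g),\ord(h)\ge\ord(a)-r$ this gives $\ord(ab)<2\ord(a)-r\le \ord(ght^r)$, so no cancellation occurs in $\det M_B=ab-ght^r$, whence $\ord(\det M_B)=\ord(a)+\ord(b)\le \ord(b)+u$.

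The main obstacle is the correct identification of the presentation matrix $N$ (the off-diagonal shift of $t^r$ when going from $\mathcal H_r$ to $\mathcal R_Q$) together with the final case analysis for the $-bt^u$ minor; the remaining bookkeeping is routine.
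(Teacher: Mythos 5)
Your proof is correct, but it takes a genuinely different route from the paper's. The paper works directly with $M_B$ and performs a single row operation: writing $a=a't^k$ with $a'$ a unit and $k=\ord(a)$, it subtracts $(a')^{-1}ht^{r-k}$ times the first row from the second (legitimate precisely because $\ord(ht^r)\ge\ord(a)$), obtaining an upper-triangular matrix with diagonal $a't^k$ and $b-(a')^{-1}ght^{r-k}$; the hypothesis $\ord(a)\le \ord(g)+r$ is then invoked to argue that the remaining off-diagonal entry does not affect the corank, and the diagonal orders give $\min\{\ord(\det M_B),\,k+u-r\}$ directly. You instead augment the (correctly $t^r$-shifted) acting matrix by the relation columns $t^u$ and $t^{u-r}$ and compute $\dim_\kk\operatorname{coker}$ via the zeroth Fitting ideal, reducing everything to a comparison of six minor orders. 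What your approach buys is that the truncations modulo $t^u$ and $t^{u-r}$ are handled mechanically by the extra columns, making fully explicit the step the paper treats somewhat tersely (``the upper right corner does not affect the computation''); the price is the extra case analysis for the $-bt^u$ minor, which you resolve correctly by observing that $\ord(b)<\ord(a)-r$ forces $\ord(\det M_B)=\ord(ab)$ with no cancellation against $ght^r$. Your identification of the physical presentation matrix (moving $t^r$ from the $(2,1)$ to the $(1,2)$ entry, preserving the determinant) is also correct and is a point worth being careful about, as you note.
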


\begin{proof}
We have that $\corank B = \dim_\kk\ker B = \dim_\kk \ker M_B$, since $B$ and $M_B$ correspond to the same $\kk$-linear map under the isomorphism $\kk^{u}\oplus\kk^{u-r}\cong\kk[t]/(t^u)\oplus\kk[t]/(t^{u-r})$. If $k=\ord(a)$, we can write 
    \[
    B = \begin{bmatrix}
        a't^k&g\\ht^{r}&b
    \end{bmatrix}
    \]
    where $a'$ is invertible in $\kt$. Row reduction gives 
    \[
    B' = \begin{bmatrix}
        a't^k&g\\0&b-(a')^{-1}ght^{r-k}
    \end{bmatrix}
    \]
    and $\corank B = \corank B'$. The condition $\ord(a)\le \ord(h)+r$ makes the row reduction possible, while the condition $\ord(a)\le \ord(g)+r$ guarantees that the upper right corner of $B'$ does not affect the computation of $\corank B'$ which is given by $k+u-r$ if $b-(a')^{-1}ght^{r-k} = 0$ and by 
    \[
    k + \ord(b-(a')^{-1}ght^{r-k}) = k + \ord(a'b-ght^{r-k}) = k+ \ord((ab-ght^r)t^{-k}) = \ord(\det M_B)
    \]
    if $b-(a')^{-1}ght^{r-k} \neq 0$. We note that we are not assuming above that either $g$, $h$ or $b$ is invertible.
\end{proof}

\begin{remark}\label{rmk:equations}
    The equations $E^{(u,u-r)}_{k,\ello}$ are given by $\ord(a)\ge k$ and $\ord\det(M_B)\ge k+\ello$.   
\end{remark}

We next show that the locus of matrices in $\mathcal N_{J_Q}$ with partition $P^Q_{k,\ello}$ of Equation \eqref{Burgespecialeq} lies inside the locus $X^Q_{k,\ello}$ defined by the equations $E^Q_{k,\ello}$.
\begin{proposition}\label{prop:inclusion}
    For a stable partition $Q = (u,u-r)$ and for $1\le k\le r-1$ and $1\le \ello \le u-r$ we have an inclusion $W^Q_{k,\ello}\subseteq X^Q_{k,\ello}$.
\end{proposition}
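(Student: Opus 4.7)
By Remark~\ref{rmk:equations} the defining conditions of $X^Q_{k,\ello}$ on a matrix $B\in\mathcal N_{J_Q}$ are exactly $\ord(a)\ge k$ and $\ord(\det M_B)\ge k+\ello$. The strategy is to derive these two order inequalities from the Jordan-type hypothesis $P_B=P_{k,\ello}$, by applying Lemma~\ref{lemma:det} to $B$ itself and---where needed---to suitable powers $B^j$, comparing against the coranks $\corank B^j=\sum_i\min(j,p_i)$ that the parts $p_i$ of $P_{k,\ello}$ prescribe.

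By the Box Theorem the partition $P_{k,\ello}$ has $k+\ello$ parts, so $\corank B=k+\ello$. When the hypothesis of Lemma~\ref{lemma:det} holds, the lemma gives
\[
k+\ello \;=\; \min\bigl\{\ord(\det M_B),\,\ord(a)+u-r\bigr\},
\]
so simultaneously $\ord(\det M_B)\ge k+\ello$ and $\ord(a)\ge k+\ello-(u-r)$. The first is half of what we need; the second is the desired $\ord(a)\ge k$ exactly in the last column $\ello=u-r$, settling that case.

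For $\ello<u-r$ the bound on $\ord(a)$ must be sharpened. The key observation is that $\mathcal R_Q$ is closed under multiplication, so each $M_B^j$ has the same block form, with top-left entry $a^{(j)}$ given from $a,b,g,h$ via the Cayley--Hamilton recursion $M_B^{j+1}=(a+b)M_B^j-\det M_B\cdot M_B^{j-1}$. Applying Lemma~\ref{lemma:det} to $B^j$ together with the already-known bound $\ord(\det M_B^j)=j\,\ord(\det M_B)\ge j(k+\ello)$ converts the equality $\corank B^j=\sum_i\min(j,p_i)$ into an order constraint on $\ord(a^{(j)})+u-r$, which back-solves to a lower bound on $\ord(a)$. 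The Burge code $\alpha^{u-r-\ello}\beta^{\ello}\alpha^{r-k}\beta^{k}\alpha$ of $P_{k,\ello}$ records precisely the values of $j$ at which the corank jumps, so it pinpoints which powers to use.

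The principal difficulty I anticipate is the tropical bookkeeping in the previous paragraph: one must either avoid or account for possible cancellation in the expression for $a^{(j)}$, which could cause the purely tropical inequality $\ord(a^{(j)})\ge\min(\text{tropical terms})$ to be strict. Combining the constraints coming from several different $j$'s (each corresponding to a jump in the corank sequence) should close this gap. Finally, the degenerate cases outside the regime of Lemma~\ref{lemma:det}---namely $a=0$, or $\ord(a)>r+\min\{\ord(g),\ord(h)\}$ so that the lower-left entry dominates in the row reduction---are treated separately: in the first, $\ord(a)\ge k$ is automatic and the determinant reduces to $-ght^r$ whose order is controlled directly; in the second, a row/column swap interchanges the roles of $a$ and $ht^r$ and the same argument applies.
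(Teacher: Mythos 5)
Your reformulation of the target via Remark~\ref{rmk:equations}, and your use of $\corank B=k+\ello$ with Lemma~\ref{lemma:det} to extract $\ord(\det M_B)\ge k+\ello$ and $\ord(a)\ge k+\ello-(u-r)$, are fine and do settle the last column $\ello=u-r$. The gap is exactly the step you flag as the ``principal difficulty'': for $\ello<u-r$ you must pass from the lower bounds $\ord(a^{(j)})\ge \corank B^j-(u-r)$ (obtained by applying Lemma~\ref{lemma:det} to $B^j$) back to a lower bound on $\ord(a)$. That implication needs an \emph{upper} bound on $\ord(a^{(j)})$ in terms of $\ord(a)$, i.e.\ absence of cancellation of leading terms in $a^{(j)}=\sigma_j a-\det(M_B)\,\sigma_{j-1}$; the tropical formulas of Lemma~\ref{tropicalpowerlemma} go the wrong way for an arbitrary $B\in W^Q_{k,\ello}$ (they hold with equality only under the ``no cancellation'' hypothesis, which is why the paper invokes them only in Lemma~\ref{tropicalcoranklemma} and Proposition~\ref{prop:generic}, for a \emph{general} element of $X^Q_{k,\ello}$). ``Combining the constraints from several $j$'s should close this gap'' is not an argument, and a non-generic $B$ is precisely where such cancellation could occur. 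You would also need to verify the hypothesis of Lemma~\ref{lemma:det} for each power $M_B^j$, and your treatment of the degenerate regimes is too quick: when $\ord(a)>r+\min\{\ord(g),\ord(h)\}$ the linear conditions on $a$ are automatic (since $k<r$), but the conditions on $b$ and the quadrics still have to be extracted, and a row swap changes which truncation, $\kk[t]/(t^u)$ versus $\kk[t]/(t^{u-r})$, the reduction takes place in, so Lemma~\ref{lemma:det} does not transfer verbatim.

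The paper closes this gap by a different device that your proposal never touches: induction on $u-r$, powered by \cite[Theorem 23]{IKM}, which says that for $B\in W^{(u+1,u+1-r)}_{k,\ello}$ the restriction $B|_{\im J_{(u+1,u+1-r)}}$ has Jordan type $\partial P^{(u+1,u+1-r)}_{k,\ello}$, a partition sitting in the box of $(u,u-r)$. Since the equations for $(u+1,u+1-r)$ restrict to those for $(u,u-r)$, the inductive hypothesis delivers all of the required equations except, in the last column, a single new one, and that one follows from the single condition $\corank B=\ell(P)$ via Lemma~\ref{lemma:det} applied to $B$ itself --- no powers of $B$ and no tropical estimates for non-generic matrices are needed. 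To salvage your route you would have to either prove the no-cancellation claim for every $B\in W^Q_{k,\ello}$ (essentially as hard as the proposition itself) or import the $\partial$/restriction compatibility from \cite{IKM} as the paper does.
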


\begin{proof}
    We will use induction on $u-r$. The base case is when $u-r=1$ and we have that $E^{(r+1,1)}_{k,1} = \{a_1,a_2,\dots,a_{k-1}\}$ for $k=1,2,\dots,r-1$. For $B\in \mathcal N_{(r+1,1)}$ we have  \[
    M_B = 
    \begin{bmatrix}
        a_1t+a_2t^2+\cdots+a_rt^r&g_0\\
        h_0t^r&0\\
    \end{bmatrix}
    \]
    and the partition $P^Q_{k,1}$ has $k+1$ parts, which means that the  $\corank B = \dim_\kk \ker(M_B)=k+1$. This forces $a_1=a_2=\cdots=a_{k-1}=0$ which shows the inclusion $W^{(r+1,1)}_{k,1}\subseteq X^{(r+1,1)}_{k,1}$, for $k=1,2,\dots,r-1$.

    Now assume by induction that we have the inclusion $W^{(u,u-r)}_{k,\ello}\subseteq X^{(u,u-r)}_{k,\ello}$ for $u-r = m\ge 1$. The restriction of the equations $E^{(u+1,u+1-r)}_{k,\ello}$ from $\kk[\mathcal N_{J_{(u+1,u+1-r)}}]$ to $\kk[\mathcal N_{J_{(u,u-r)}}]$ gives the equations $E^{(u,u-r)}_{k,\ello}$ when $\ello\le u-r$. By \cite[Theorem 23]{IKM} we have that for $B\in W^{(u+1-r)}_{k,\ello}$ the Jordan type of $B|_{\im(J_{(u+1,u+1-r)}
)}$ is $\partial (P^{(u+1,u+1-r)}_{k,\ello}) = P^{(u,u-r)}_{k,\ello}$ which is in $\mathcal N_{J_{\partial (u+1,u+1-r)}} = \mathcal N_{J_{(u,u-r)}}$ and $\mathfrak D(\partial P^{(u+1,u+1-r)}_{k,\ello}) = (u,u-r)$. Hence we conclude by induction on $u-r$ that $W^{(u+1,u+1-r)}_{k,\ello} \subseteq X^{(u+1,u+1-r)}_{k,\ello}$ when $\ello\le u-r$.

    When $\ello=u+1-r$ we get that $\partial P^{(u+1,u+1-r)}_{k,\ello} = P^{(u,u-r)}_{k,\ello-1} = P^{(u,u-r)}_{k,u-r} $. By the same argument as above we get that $W^{(u+1,u+1-r)}_{k,u+1-r} \subseteq X^{(u+1,u+1-r)}_{k,u-r}$. The number of parts of $P^{(u+1,u+1-r)}_{k,u+1-r}$ is $k+u+1-r$ which means that $\corank B = k+u+1-r$. By Lemma~\ref{lemma:det} and Remark~\ref{rmk:equations} this gives the last equation in $E^{(u+1,u+1-r)}_{k,u+1-r}$ when $k+u+1-r> r$ since it is an irreducible quadric. Hence $W^{(u+1,u+1-r)}_{k,u+1-r} \subseteq X^{(u+1,u+1-r)}_{k,u+1-r}$ for $k+u+1-r>r$. When $k+u+1-r\le r$, the equation given by $\ord\det(M_B)\ge k+u+1-r$ is $a_kb_{u+1-r}=0$. If $a_k=0$ we get that $B|_{\im J_Q}$ is in $X^{(u,u-r)}_{k+1,u-r}$, but this would imply that it has at least $k+1+u-r$ parts. But $P^{(u,u-r)}_{k,u-r}$ has $k+u-r$ parts. Hence we conclude that $b_{u+1-r}=0$ and $W^{(u+1,u+1-r)}_{k,u+1-r} \subseteq X^{(u+1,u+1-r)}_{k,u+1-r}$.
\end{proof}

It will be useful to use the following \emph{tropical} notation when computing the order matrices of powers of matrices.

\begin{notation}\label{tropical}
    The notation $a\oplus b = \min\{a,b\}$ will be used for $a,b\in \mathbb N$.
\end{notation}

\begin{lemma}\label{tropicalpowerlemma}
  If $M\in \mathcal H_r$ with $T(M) =
  \begin{bmatrix}
    k&0\\r&\ello
  \end{bmatrix}
$  and if there are no cancellations of leading terms in $M^s$, we have that  $T(M^s)$ is given by 
\[
\textstyle
\begin{bmatrix}
  sk\oplus\frac s2r\oplus((s-2)\ello+r) &
  (s-1)k\oplus(k+\frac{s-2}2r)\oplus (\ello +
  \frac{s-2}2r)\oplus(s-1)\ello\\
 ((s-1)k+r)\oplus(k+\frac{s}2r)\oplus (\ello +
\frac{s}2r)\oplus((s-1)\ello+r)& ((s-2)k+r)\oplus\frac s2r\oplus s\ello
\end{bmatrix}
\]
for even $s\ge 2$ and 
\[
\textstyle
\begin{bmatrix}
  sk\oplus (k+\frac {s-1}2r)\oplus (\ello+\frac {s-1}2r)\oplus((s-2)\ello+r) &
  (s-1)k\oplus\frac{s-1}2r\oplus (s-1)\ello\\
  ((s-1)k+r)\oplus\frac{s+1}2r\oplus ((s-1)\ello+r)&
  ((s-2)k+r)\oplus (k+\frac {s-1}2r)\oplus (\ello+\frac {s-1}2r)\oplus s\ello 
\end{bmatrix}
\]
for odd $s\ge 3$.  
\end{lemma}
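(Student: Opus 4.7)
The plan is to combine the no-cancellation hypothesis with a combinatorial interpretation of the tropical matrix power. Under no-cancellation, for $A,B \in \mathcal H_r$ the formula $\ord((AB)_{ij}) = \min_k(\ord(A_{ik}) + \ord(B_{kj}))$ holds, and iterating gives
\[
T(M^s)_{ij} \;=\; \min\Bigl\{\sum_{t=0}^{s-1} w(v_t, v_{t+1}) \,\colon\, v_0 = i,\, v_s = j,\, v_t \in \{1,2\}\Bigr\},
\]
with edge weights $w(1,1) = k$, $w(1,2) = 0$, $w(2,1) = r$, and $w(2,2) = \ello$. Each entry of $T(M^s)$ is thus the minimum weight of a length-$s$ walk in a $2$-vertex weighted digraph, and the base case $s=2$ follows immediately by reading off the orders of the four entries of $M^2$.

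The key step is to recognize that each walk contributes a linear expression in $k,\ello,r$. I would parametrize walks by triples $(a,b,p)$ --- the total number of self-loops at vertex~$1$, of self-loops at vertex~$2$, and of completed $1\to 2\to 1$ excursions --- so that the weight becomes $ak + b\ello + pr$ (with a shift by $r$ in the $(2,1)$ case). The feasibility set is carved out by the length constraint $a+b+2p = s$ for diagonal entries or $a+b+2p = s-1$ for off-diagonal ones, together with realizability constraints such as ``$b>0$ forces $p\geq 1$'' in the $(1,1)$ case and its symmetric analogue for $(2,2)$. Since the weight is linear, its minimum on this lattice polytope is attained at an integer vertex of the convex hull.

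Enumerating the integer vertices in each of the four entries and two parities of $s$ produces exactly the ``extremal walks'' appearing in the lemma: the pure self-loop vertices $(s,0,0)$ of weight $sk$ and $(0,s,0)$ of weight $s\ello$ where realizable; the pure-alternation vertex $(0,0,\lfloor s/2 \rfloor)$ of weight $\lfloor s/2\rfloor\,r$, with an appropriate residual self-loop $(1,0,\cdot)$ or $(0,1,\cdot)$ when $s$ is odd; and the single-excursion vertices $(s-2,0,1)$ and $(0,s-2,1)$. Matching these vertices against the eight displayed formulas verifies the statement, modulo two vertices that are feasible but not listed --- $(s-2)k + r$ in the $(1,1)$ entry and $(s-2)\ello + r$ in the $(2,2)$ entry --- each of which I would show to be $\geq$ a listed term by a one-line case split: for $(s-2)k + r$ in the $(1,1)$ entry, if $r \leq 2k$ then $(s-2)k + r \geq \frac{s}{2}r$ (resp.\ $k + \frac{s-1}{2}r$ when $s$ is odd), while if $r > 2k$ then $(s-2)k + r \geq sk$; the $(2,2)$ case is symmetric under $k \leftrightarrow \ello$.

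The main obstacle is clerical rather than conceptual. The lemma packages eight distinct formulas (four matrix entries by two parities), each requiring its own vertex enumeration and, in the diagonal cases, a short dominance check for the leftover vertex; no single verification is delicate once the tropical walk framework is set up, but the proof consists of systematically doing all of them.
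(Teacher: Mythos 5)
Your proposal is correct, and it takes a genuinely different route from the paper. The paper proves the lemma by induction on $s$: it computes $T(M^2)$ and $T(M^3)$ directly, then tropically multiplies $T(M^s)$ by $T(M)$, passing separately from even to odd powers and from odd to even, and discards dominated terms at each step by observing that every term is a convex combination of the three quantities $k$, $r/2$, $\ello$ with prescribed total weight, so only the extreme cases survive. You instead give a one-shot global argument: $T(M^s)_{ij}$ is the minimum weight of a length-$s$ walk in the $2$-vertex digraph with weights $w(1,1)=k$, $w(1,2)=0$, $w(2,1)=r$, $w(2,2)=\ello$; parametrizing walks by $(a,b,p)$ makes the weight linear, and the minimum is attained at a vertex of the convex hull of the feasible lattice points. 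Your vertex enumeration reproduces exactly the displayed terms in all eight cases (I checked them), and your dominance estimates for the leftover points $(s-2)k+r$ and $(s-2)\ello+r$ are correct --- though in fact these points are not vertices at all: for instance $(s-2,0,1)$ lies on the segment joining $(s,0,0)$ to $(0,0,s/2)$ (even $s$) or to $(1,0,\tfrac{s-1}{2})$ (odd $s$), so the case split you sketch is a correct but redundant safety net. Both proofs ultimately rest on the same convexity observation; yours makes it structural (the surviving terms are precisely vertices of a walk polytope, which explains \emph{why} the formulas look as they do and yields the answer non-recursively), while the paper's induction is more elementary and has the side benefit of exhibiting the recursive compatibility $T(M^{s+1})=T(M^s)\cdot T(M)$ term by term. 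The only point to be careful about in a write-up is to state the realizability constraints ($b>0\Rightarrow p\ge 1$ for the $(1,1)$ entry, $a>0\Rightarrow p\ge 1$ for $(2,2)$, none for the off-diagonal entries) and the parity constraint on $a+b$, since these are exactly what distinguish the four entries and the two parities.
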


\begin{proof}
We start by computing $T(M^s)$ for $s=2$ and $s=3$.
\[
T(M^2) = \textstyle
\begin{bmatrix}
  2k \oplus r & k\oplus \ello\\
(k+r)\oplus (\ello +r)& r\oplus 2\ello
\end{bmatrix}
\]
and 
\[
T(M^3) = \textstyle
\begin{bmatrix}
  3k \oplus (k+r)\oplus(\ello+r) & 2k\oplus r\oplus 2\ello\\
(2k+r)\oplus 2r\oplus (2\ello +r)& (k+r)\oplus (\ello+r)\oplus 3\ello
\end{bmatrix}
\]
which agrees with the formulas.  We then proceed by induction starting with even $s$  and multiply by $M$. The entries of $T(M^{s+1})$ will be
\[\begin{split}
a_{1,1} & \textstyle=  (s+1)k\oplus(k+\frac s2r)\oplus(k+(s-2)\ello+r)  \oplus 
 ((s-1)k+r)\oplus(k+\frac{s}2r)\\&\textstyle\oplus (\ello +
\frac{s}2r)\oplus((s-1)\ello+r) = (s+1)k\oplus(k+\frac s2r)\oplus (\ello +
\frac{s}2r)\oplus((s-1)\ello+r)\\
a_{2,1} &\textstyle = (sk+r)\oplus(\frac s2r+r)\oplus((s-2)\ello+r+r)\oplus((s-1)k+\ello+r)\oplus(k+\ello+\frac s2r)\\&\textstyle \oplus(2\ello+\frac s2r)\oplus(s\ello+r) =  (sk+r)\oplus \frac{s+2}2r \oplus(s\ello+r)\\
a_{1,2} &\textstyle = sk\oplus (2k+\frac{s-2}2r)\oplus (k+\ello+\frac{s-2}2r)\oplus (k+(s-1)\ello)\oplus ((s-2)k+r)\oplus\frac s2r\oplus s\ello \\&\textstyle=
sk\oplus\frac{s}2r\oplus s\ello \\
a_{2,2} & \textstyle= ((s-1)k+r)\oplus(k+\frac{s}2r)\oplus(\ello+\frac{s}2r)\oplus((s-1)\ello+r)\oplus((s-2)k+\ello+r)\oplus(\ello+\frac{s}2r)\\&\textstyle
\oplus((s+1)\ello) =  ((s-1)k+r)\oplus(k+\frac{s}2r)\oplus(\ello+\frac{s}2r)\oplus((s-1)\ello+r)
\end{split}
\]
where we have used that we only need to consider the extreme cases for the three quantities $k$, $r/2$ and $\ello$ since all terms are convex combinations of them with weights summing to $s+1$, $s+2$, $s$ and $s+1$, respectively.

We now multiply from odd powers to even powers to get the entries of $T(M^{s+1})$ as 
\[
\begin{split}
    a_{1,1} &\textstyle= (s+1)k \oplus (2k+\frac{s-1}2r)\oplus(k+\ello+\frac{s-1}{2}r)\oplus(k+(s-2)\ello+r) \oplus((s-1)k+r) \\& \textstyle
    \oplus\frac{s+1}{2}r\oplus((s-1)\ello+r)= (s+1)k\oplus \frac{s+1}2r\oplus((s-1)\ello+r)\\
    a_{2,1} &\textstyle= (sk+r)\oplus(k+\frac{s+1}{2}r)\oplus(\ello+\frac{s+1}{2}r)\oplus((s-2)\ello+2r)\oplus((s-1)k+\ello+r) \\
    &\textstyle\oplus(\ello+\frac{s+1}{2}r)\oplus(s\ello+r)=
    (sk+r)\oplus(k+\frac{s+1}{2}r)\oplus(\ello+\frac{s+1}{2}r)\oplus(s\ello+r) \\
    a_{1,2} &\textstyle= sk\oplus(k+\frac{s-1}{2}r)\oplus(k+(s-1)\ello)\oplus((s-2)k+r)\oplus(k+\frac{s-1}{2}r)\oplus(\ello+\frac{s-1}{2}r)\oplus s\ello
    \\ &\textstyle=
    sk \oplus( k+ \frac{s-1}{2}r)\oplus(\ello +\frac{s-1}{2}r)\oplus s\ello
    \\
    a_{2,2} &\textstyle= ((s-1)k+r)\oplus(\frac{s+1}{2}r)\oplus((s-1)\ello+r)\oplus ((s-2)k+\ello+r)\oplus(\ello+\frac{s}{2}r)\oplus(s+1)\ello\\
    &\textstyle=
    ((s-1)k+r)\oplus\frac{s+1}{2}r\oplus(s+1)\ello
\end{split}
\]
\end{proof}

\begin{lemma}\label{tropicalcoranklemma}
    For $B$ general in $X^{(u,u-r)}_{k,\ello}$, where $1\le k\le r-1$ and $1\le \ello\le u-r$, we have that 
    \[
    \corank B^s = (k+\ello)s \oplus (T(M_B^s)_{1,1}+u-r) \oplus (2u-r),\quad  \text{for $s\ge 0$.}
    \]
\end{lemma}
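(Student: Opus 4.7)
The plan is to apply Lemma~\ref{lemma:det} to the matrix $M_B^s \in \mathcal H_r$ rather than $M_B$ itself, and identify the resulting quantities with the tropical data computed in Lemma~\ref{tropicalpowerlemma}.

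First I would pin down the tropical data of a generic $B \in X^{(u,u-r)}_{k,\ello}$. The linear equations in $E^Q_{k,\ello}$ force $a_1 = \cdots = a_{k-1} = 0$ and $b_1 = \cdots = b_{\ello-1} = 0$, while $a_k, b_\ello, g_0, h_0$ are unconstrained and hence generically nonzero, so $T(M_B) = \begin{bmatrix} k & 0 \\ r & \ello \end{bmatrix}$. When $k+\ello \ge r+1$ the quadratic equations in $E^Q_{k,\ello}$ impose $\ord(\det M_B) \ge k+\ello$, with equality on an open subset; for $k+\ello \le r$ one has $\ord(\det M_B) = \min\{k+\ello, r\} = k+\ello$ automatically. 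Restricting further to the open locus where no accidental cancellations of leading terms occur in $M_B^s$ for $s$ in the finite range $1 \le s \le u$ ensures that the formulas of Lemma~\ref{tropicalpowerlemma} compute the actual orders of the entries of $M_B^s$. Under the ring isomorphism $\mathcal N_{J_Q} \to \mathcal R_Q$, $B^s$ corresponds to $M_B^s$, so $\corank B^s = \dim_\kk \ker M_B^s$.

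Second, I would verify the hypothesis of Lemma~\ref{lemma:det} applied to $M_B^s$, namely
\[
T(M_B^s)_{1,1} \;\le\; r + \min\{T(M_B^s)_{1,2},\; T(M_B^s)_{2,1} - r\}.
\]
Inspecting the formulas of Lemma~\ref{tropicalpowerlemma} reveals that every $\oplus$-summand of $T(M_B^s)_{2,1}$ is obtained by adding $r$ to a corresponding summand of $T(M_B^s)_{1,2}$, so $r + T(M_B^s)_{1,2} = T(M_B^s)_{2,1}$ and the right-hand side collapses to $T(M_B^s)_{2,1}$. A term-by-term comparison, using $k \le r-1 < r$, then yields $T(M_B^s)_{1,1} \le T(M_B^s)_{2,1}$: for instance, the summand $sk$ of $T(M_B^s)_{1,1}$ is $\le$ the summand $(s-1)k + r$ of $T(M_B^s)_{2,1}$, and the summand $(s-2)\ello + r$ is $\le (s-1)\ello + r$, and similarly for the remaining summands. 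The cases $s = 0, 1$ are handled directly.

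Lemma~\ref{lemma:det} then gives $\corank B^s = \min\{\ord(\det M_B^s),\, T(M_B^s)_{1,1} + u - r\}$. Multiplicativity of the determinant yields $\det M_B^s = (\det M_B)^s$, so $\ord(\det M_B^s) = s(k+\ello)$ on the generic locus, matching the first two $\oplus$-summands in the claim. The third summand $\oplus (2u-r)$ is the trivial upper bound $\corank B^s \le \dim_\kk V = 2u - r$, and it governs precisely the boundary regime in which either $(M_B^s)_{1,1}$ or $\det M_B^s$ vanishes modulo $t^u$---the case that lies outside the hypotheses of Lemma~\ref{lemma:det} and must be treated as the ambient dimension cap. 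The main obstacle is ensuring a single dense open subset of $X^Q_{k,\ello}$ works simultaneously for every power $s$ in the required range: this is a finite intersection of open conditions (genericity of the leading coefficients of $a, b, g, h$, sharpness of $\ord(\det M_B)$, and absence of cancellations through $s = u$), so the intersection remains dense and the generic statement is unambiguous.
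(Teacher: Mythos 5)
Your proposal follows essentially the same route as the paper: apply Lemma~\ref{lemma:det} to $M_B^s$, verify its hypothesis via the identity $T(M_B^s)_{2,1}=T(M_B^s)_{1,2}+r$ and a term-by-term comparison from Lemma~\ref{tropicalpowerlemma}, get the first term from $\det(M_B^s)=(\det M_B)^s$, and treat the other two terms as the regimes where the quotients by $(t^u)$ and $(t^{u-r})$ invalidate the determinant computation or kill $M_B^s$ entirely. One inaccuracy: when $k+\ello\ge r+1$ the linear part of $E^Q_{k,\ello}$ only imposes $b_1=\cdots=b_{r-k-1}=0$, so the generic order of $b$ is $\ello'=\ello\oplus(r-k)$, not $\ello$; the paper records $T(M_B)=\begin{bmatrix}k&0\\r&\ello'\end{bmatrix}$. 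This does not derail your argument here, since the hypothesis check for Lemma~\ref{lemma:det} goes through for any value of the $(2,2)$ order and the stated formula refers to the actual $T(M_B^s)_{1,1}$, but the correct $\ello'$ is essential in the follow-up Proposition~\ref{prop:generic}, so it is worth fixing.
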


\begin{proof}
    Observe that since $B$ is general in $X^{(u,u-r)}_{k,\ello}$, the order of $\det(M_B)$ is $k+\ello$.    We have that 
    \[
    T(M_B) = \begin{bmatrix}
        k&0\\r&\ello'
    \end{bmatrix}
    \]
    where $\ello' = \ello\oplus(r-k)$. Hence we have that $\ello'<r$.  Lemma~\ref{tropicalpowerlemma} gives that 
    \[
    T(M^s_B)_{1,1}\le T(M^s_B)_{2,1} = T(M^s_B)_{1,2}+r
    \]
    since $k<r$ and $\ello'<r$ . Hence the conditions in Lemma~\ref{lemma:det} are satisfied for $M_B^s$.
    
    As long as the computation of the determinant in $\kt$ is not affected by the quotients by $(t^u)$ and $(t^{u-r})$, we have that $\det(B^s) = (\det B)^s$, which gives the first term.   
    
    When $(k+\ello)s>T(M_B^s)_{1,1}+u-r$, the computation of the determinant is not valid since we will have a zero row after row reduction when computing modulo $(t^{u-k})$ in the second row. Now, $\corank B^s =T(M_B^s)+u-r$ as long as $M_B^s\ne 0$. When $M_B=0$ we get $\corank B_s=u+(u-r)=2u-r$.
\end{proof}

We next show that the equations we give for the entries in the last column of $\mathcal T(Q)$ determine $P^Q_{k,u-r}.$
 \begin{proposition}\label{prop:generic}
   Let $Q=(u,u-r)$ and assume that $B$ is a generic element of  $X^Q_{k,u-r}$, for some $1\leq k\leq r-1$. Then $P_B=P^Q_{k,u-r}$. 
\end{proposition}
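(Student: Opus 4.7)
The plan is to compute the Jordan type $P_B$ of a generic $B \in X^Q_{k, u-r}$ via its corank sequence $(\corank B^s)_{s\ge 0}$, using the tropical calculus of Lemmas~\ref{tropicalpowerlemma} and~\ref{tropicalcoranklemma}, and to compare the result against the Burge code description of $P^Q_{k, u-r}$ given by \eqref{Burgespecialeq}. Since $P_B$ is determined by the corank sequence via $\lambda_s^*(P_B) = \corank B^s - \corank B^{s-1}$, it suffices to check that the tropical formula for $\corank B^s$ reproduces the conjugate of $P^Q_{k, u-r}$.

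On the tropical side, Remark~\ref{rmk:equations} identifies the equations $E^Q_{k,u-r}$ as $\ord(a) \ge k$ and $\ord(\det M_B) \ge k + (u-r)$, both equalities generically; one also has $\ord(g) = \ord(h) = 0$ generically, and $\ord(b) = \ello' := \min(u-r,\,r-k) < r$ (from the linear equations in $E^Q_{k, u-r}$, or from the quadratic ones in the regime $k + \ello > r$). Therefore
\[
T(M_B) = \begin{bmatrix} k & 0 \\ r & \ello' \end{bmatrix},
\]
and the hypotheses of Lemma~\ref{tropicalpowerlemma} are met, yielding closed-form expressions for $T(M_B^s)_{1,1}$ for every $s$. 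Lemma~\ref{tropicalcoranklemma} then produces
\[
\corank B^s = (k + u - r)\,s \;\oplus\; \bigl(T(M_B^s)_{1,1} + u - r\bigr) \;\oplus\; (2u - r).
\]

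On the combinatorial side, the Burge code $\beta^{u-r}\alpha^{r-k}\beta^k\alpha$ of $P^Q_{k, u-r}$ encodes a $\partial$-chain that starts with $u-r$ length-decreasing $\mathcal B$-steps reaching the almost rectangular partition $([r]^k)$. Building backward in the $\mathcal B$-class -- each $\partial^{-1}$-step adds a new part of size $1$ and one extra box to the largest part of every almost rectangular block of the current partition -- produces the explicit parts of $P^Q_{k, u-r}$ (which has $k + (u-r)$ parts summing to $2u - r$), and hence its corank sequence. The main obstacle is the final verification that the two corank sequences agree: the tropical $\corank B^s$ is piecewise linear in $s$ with breakpoints driven by the parity of $s$ and by the comparisons $k \lessgtr \ello'$ and $k + \ello \lessgtr r$, while the corank sequence of $P^Q_{k, u-r}$ is piecewise linear with breakpoints at its distinct part sizes. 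Aligning slopes, intercepts, and breakpoints across every regime is the bulk of the work, mechanical but requiring careful case-by-case bookkeeping.
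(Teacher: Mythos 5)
Your strategy coincides with the paper's: set up $T(M_B)=\left[\begin{smallmatrix}k&0\\ r&\ello'\end{smallmatrix}\right]$ with $\ello'=(u-r)\oplus(r-k)$, feed it into Lemmas~\ref{tropicalpowerlemma} and~\ref{tropicalcoranklemma} to get the corank sequence of a generic $B$, and match against the Burge code $\beta^{u-r}\alpha^{r-k}\beta^k\alpha$. The setup (generic orders of $a,b,g,h$, the formula for $\corank B^s$, the role of Remark~\ref{rmk:equations}) is all correct. But the proposal stops exactly where the actual proof begins: the sentence ``aligning slopes, intercepts, and breakpoints across every regime is the bulk of the work'' defers the entire substantive content. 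The paper's proof consists precisely of that case analysis: it splits into $2k\le r$ versus $2k>r$, and within the latter into $\ello'=r-k$ versus $\ello'=u-r$, simplifies $(T(M_B^s))_{1,1}$ in each regime (e.g.\ to $sk\oplus u$ when $2k\le r$, to $((s-2)\ello'+r)\oplus u$ when $2k>r$), reads off an explicit partition in each case ($([u]^k,1^{u-r})$, $([u-r+2(r-k)]^{r-k},1^{u-2(r-k)})$, or $([u+r-2k]^{r-k},1^{u-2r+2k})$), and verifies its Burge code. Without carrying out at least one of these computations you have a plan, not a proof.

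A secondary point: your route runs the comparison in the opposite direction from the paper's, computing $P^Q_{k,u-r}$ by inverting the Burge code (``each $\partial^{-1}$-step adds a new part of size $1$ and one extra box to the largest part of every almost rectangular block''). This inverse description is imprecise as stated, because the almost rectangular decomposition of $\partial P$ is generally not the decomposition of $P$ with boxes removed (the paper's first example illustrates exactly this), so the blocks to which you would ``add back'' boxes are not well defined without more care. The paper avoids this by going forward: it first extracts the explicit partition $P_B$ from the corank sequence and then applies $\partial$ repeatedly to $P_B$, where the map is unambiguous. I recommend adopting that direction; combined with the explicit case analysis above it closes the gap.
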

    
\begin{proof}
    Assume that $B$ is a general enough element of $X^Q_{k,u-r}$. We will show that the Burge code of $P_B$ is given by
    \begin{equation}\label{desiredpart}
        \overbrace{\beta \dots \beta}^{u-r}\underbrace{\alpha \dots \alpha}_{r-k}\overbrace{\beta \dots \beta}^{k}\alpha.
    \end{equation}

Since $B$ is general in $X^Q_{k,u-r}$ we have that $\corank B=\operatorname{ord}(\operatorname{det} M_B)=k+u-r$ and  
\[
T\left(M_B\right)=  \begin{bmatrix}k&&0\\r&&\ello'\end{bmatrix},
\]
where $\ello'=(u-r)\oplus (r-k)$.

By Lemma \ref{tropicalcoranklemma}, for $s\geq 1$, we have that
\[
\corank B^s=s(k+\ello)\oplus ((T(M^s_B))_{11}+u-r)\oplus(u+u-r).
\]
and by Lemma \ref{tropicalpowerlemma}, for $s\geq 2$, we have that
\[
(T(M^s_B))_{11}=
	\begin{cases}
	sk\oplus\frac{s}{2}r\oplus((s-2)\ello'+r)\oplus u,&\text{ if $s$ is even},\\ 
	sk\oplus(k+\frac{s-1}{2}r)\oplus(\ello'+\frac{s-1}{2}r)\oplus((s-2)\ello'+r)\oplus u,&\text{ if $s$ is odd}.
	\end{cases}
\]
There are four cases depending on if $2k\le r$ or $2k>r$ and if $\ello'=r-k$ or $\ello'=u-r$.

First assume that $2k\leq r$ and that $\ello'=r-k$. 
Then we have $\ello'=r-k\ge 2k-k = k$ and hence
\[
sk\le \frac{s}{2}r, \quad sk\le (s-2)k+r \le (s-2)\ello'+r.
\]
and 
\[
sk=k+(s-1)k\leq k+\frac{s-1}{2}r\leq \ello'+\frac{s-1}{2}r.
\]
We conclude that $(T(M^s_B))_{11}=sk\oplus u$ for $s\ge 2$.

Now assume that $2k\le r$ and that $\ello'=u-r$. Then for $s=2$ we have $\frac{s}{2}r = (s-2)\ello'+r = r$ and for $s\ge 3$ we have 
\[
(s-2)\ello'\ge u \quad \text{and}\quad \ello'+\frac{s-1}{2}\ge u
\]
We conclude that also in this case we have $(T(M^s_B))_{11}=sk\oplus u$ for $s\ge 2$.

Since $\corank B=k+u-r$, we can write 
\[
\corank B^s=(sk+u-r)\oplus(u+u-r)
=\begin{cases}
     sk+u-r,&\text{for  $1\leq s \leq \frac{u}{k}$},\\ 
     u+u-r, &\text{for  $\frac{u}{k}<s$}
\end{cases}
\]
for $s\geq 1$, when $2k\le r$.

We recall that, considering the Ferrers diagram of $P_B$, the corank of $B^s$ is the total number of boxes in the first $s$ columns of $P_B$. Thus, the calculations above show that $P_B=([u]^k,1^{u-r})$. 

Thus we have that $\partial^iP_B = ([u-i]^k,1^{u-r-i})$, for $i=0,1,\dots,u-r$, which shows that the Burge code starts with $(\beta)^{u-r}$ and $\partial^{u-r}P_B =([r]^k)$. The Burge code for $([r]^k)$ is $\alpha^{r-k}\beta^k\alpha$ since we have that  $\partial^i([r]^k) = \partial([r-i]^k)$, for $i=0,1,\dots,r-k$ and $\partial^{r-k}([r]^k)=([k]^k)=(1^k)$. Hence the Burge code is $\beta^{u-r}\alpha^{r-k}\beta^k\alpha$. See Figure~\ref{fig:Case1} for an illustration of the process.

We now assume that  $2k>r$. Since $k+\ello'\leq r$, we get $\ello'<\frac{r}{2}<k$. Thus for $s\geq 2$, we have 
\[
(s-2)\ello'+r\leq \frac{s-2}{2}r+r=\frac{s}{2}r<sk.
\]
and for  $s\geq 3$ we have 
\[
(s-2)\ello'+r=\ello'+(s-3)\ello'+r\leq \ello'+\frac{s-1}{2}r< k+\frac{s-1}{2}r.
\]
Thus we conclude that 
\(
(T(M^s_B))_{11}=((s-2)\ello'+r)\oplus u.
\) for $s\ge 2$.

Now we consider the two possible cases for $\ello'$. First assume that $\ello' = r-k\le u-r$. Then we get
\[
\corank B^s=\begin{cases}
    k+u-r, &\quad \text{for $s=1$,} \\
     (s-2)(r-k)+u,&\quad \text{for $2\leq s \leq \frac{u-r}{r-k}+2$,}\\
     u+u-r, &\quad\text{for $s>\frac{u-r}{r-k}+2.$}
\end{cases}
\]
Thus, in this case $P_B=([u-r+2(r-k)]^{r-k},1^{u-2(r-k)})$. 
We have that $\partial^iP_B =([u-r+2(r-k)-i]^{r-k},1^{u-2(r-k)-i})$ for $i = 0,1,\dots,u-r$ and $\partial^{u-r}P_B = ([2(r-k)]^{r-k},1^{2k-r})=([r]^k)$. Thus the Burge code for $P_B$ starts with $\beta^{u-r}$. We have already seen that the Burge code for $[r]^k$ is $\alpha^{r-k}\beta^k\alpha$ which shows that the Burge code for $P_B$ is $\beta^{u-r}\alpha^{r-k}\beta^k\alpha$. See Figure~\ref{fig:Case2.1} for an illustration of the process.

Finally assume that $\ello'=u-r \leq r-k$. In this case, 
\[
\corank B^s=\begin{cases}
    k+u-r, & \quad \text{for $s=1$,} \\
    u,& \quad\text{for $s=2$,}\\ 
     u+u-r, & \quad \text{for $s\geq 3$.}
\end{cases}
\]
Therefore, $P_B=(3^{u-r}, 2^{(r-k)-(u-r)}, 1^{u-r+(2k-r)})=([u+r-2k]^{r-k}, 1^{u-2r+2k})$.  We have that $\partial^i P_B =([u+r-2k-i]^{r-k}, 1^{u-2r+2k-i})$ for $i=0,1,\dots,u-r$ and $\partial^{u-r}P_B = ([2(r-k)]^{r-k},1^{2k-r})=(2^{r-k},1^{2k-r})=([r]^k)$, which shows that the Burge code of $P_B$ starts with $\beta^{u-r}$ and continues with the Burge code for $([r]^k)=\alpha^{r-k}\beta^k\alpha$. Hence the Burge code is $\beta^{u-r}\alpha^{r-k}\beta^k\alpha$ as desired. See Figure~\ref{fig:Case2.2} for an illustration of the process.
\end{proof}
\begin{figure}
    \centering
    \begin{tikzpicture}[scale=0.75]
\foreach \i in {0} {
    
    \draw [draw=black, fill=lightgray] (\i+0,0) rectangle (\i+0.5,3);
   
    \draw [->] (\i-0.25,1.5)--(\i-0.25,0.05);
    \draw node at(\i-0.25,2.5) [rotate=90] {\tiny{$u-r+k$}} ;
    \draw [->] (\i-0.25,3.5)--(\i-0.25,4.95);
    \draw (\i+0,3)--(\i+1,3)--(\i+1,3.5)--(\i+1.5,3.5)--(\i+1.5,5)--(\i+0,5)--(\i+0,3);
    \draw [fill=lightgray] (\i+1,3)--(\i+1,3.5)--(\i+1.5,3.5)--(\i+1.5,5)--(\i+2.5,5)--(\i+2.5,4.5)--(\i+2,4.5)--(\i+2,3)--(\i+1,3);
    \draw [->] (\i+2.75,3.75)--(\i+2.75,3.05);
    \draw node at (\i+2.75,4) [rotate=90]{\tiny{$k$}} ;
    \draw [->] (\i+2.75,4.25)--(\i+2.75,4.95);
    
    \draw node at (\i+1,-.5) {\tiny{$P=([u]^k,1^{u-r})$}};
    \draw node at (\i+1,-1.5) {\tiny{Burge code:}};
    
    \draw node at (\i+3.5,-1.75) {\tiny{$\underbrace{\beta\dots \dots \beta}_{u-r}$}};
}

\foreach \i in {5}{
    \draw (\i+0,3)--(\i+1,3)--(\i+1,3.5)--(\i+1.5,3.5)--(\i+1.5,5)--(\i+0,5)--(\i+0,3);
    \draw [fill=lightgray] (\i+0.5,3)--(\i+1,3)--(\i+1,3.5)--(\i+1.5,3.5)--(\i+1.5,5)--(\i+0.5,5)--(\i+0.5,3);
    \draw node at (\i+1,-.5) {\tiny{$\partial^{u-r}P=([r]^k)$}};
    
    \draw node at (\i+2.5,-1.75) {\tiny{$\underbrace{\alpha \dots \alpha}_{r-k}$}};
   
}
\foreach \i in {8}{
    \draw [fill=lightgray] (\i+0.75,3) rectangle (\i+1.25,5);
    \draw node at (\i+1,-.5) {\tiny{$\partial^{u-k}P=(1^k)$}};
    
    \draw node at (\i+2.5,-1.75) {\tiny{$\underbrace{\beta \dots \beta}_{k}$}};
  
}
\foreach \i in {11}{
    \draw node at (\i+1,-.5) {\tiny{$\partial^{u}P=\varepsilon$}};
    
    \draw node at (\i+2,-1.5) {\tiny{$\alpha$}};
    
}
\draw (-1,-2.5) rectangle (14,5.5);
\end{tikzpicture}
    \caption{Illustration for computation of the Burge code in Proposition~\ref{prop:generic} when $2k\leq r$.}
    \label{fig:Case1}
\end{figure}
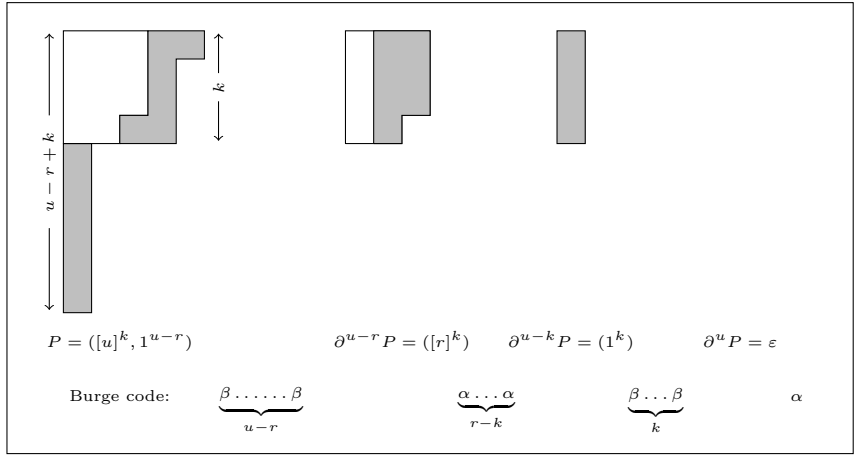

\begin{figure}
    \centering
    \begin{tikzpicture}[scale=0.75]
\foreach \i in {0} {
    
    \draw  (\i+0,0) rectangle (\i+0.5,3.5);
    \draw [draw=black, fill=lightgray] (\i+0,0) rectangle (\i+0.5,2);
    
    \draw [->] (\i-0.25,1.5)--(\i-0.25,0.05);
    \draw node at(\i-0.25,2.5) [rotate=90] {\tiny{$u-r+k$}} ;
    \draw [->] (\i-0.25,3.5)--(\i-0.25,4.95);
    \draw [->] (\i+0.75,0.5)--(\i+0.75,0.05);
    \draw node at(\i+0.75,1) [rotate=90] {\tiny{$u-r$}} ;
    \draw [->] (\i+0.75,1.5)--(\i+0.75,1.95);
    \draw (\i+0,3.5)--(\i+1.5,3.5)--(\i+1.5,4.5)--(\i+2,4.5)--(\i+2,5)--(\i+0,5)--(\i+0,3.5);
    \draw [fill=lightgray] (\i+1,3.5)--(\i+1.5,3.5)--(\i+1.5,4.5)--(\i+2,4.5)--(\i+2,5)--(\i+1,5)--(\i+1,3.5);
    \draw [->] (\i+2.25,3.75)--(\i+2.25,3.55);
    \draw node at (\i+2.25,4.25) [rotate=90]{\tiny{$r-k$}} ;
    \draw [->] (\i+2.25,4.75)--(\i+2.25,4.95);
     \draw node at (\i+0,-.5) {\tiny{$P=([u-r+2(r-k)]^{r-k},1^{u-2(r-k)})$}};
    \draw node at (\i+1,-1.5) {\tiny{Burge code:}};
    
    \draw node at (\i+3.5,-1.75) {\tiny{$\underbrace{\beta\dots \dots \beta}_{u-r}$}};
}

\foreach \i in {5}{
    \draw  (\i+0,2) rectangle (\i+0.5,5);
    
    \draw [fill=lightgray] (\i+0.5,3.5) rectangle (\i+1,5);
    \draw node at (\i+0.75,-.5) {\tiny{$\partial^{u-r}P=(2^{r-k},1^{2k-r})$}};
    
    \draw node at (\i+2.75,-1.75) {\tiny{$\underbrace{\alpha \dots \alpha}_{r-k}$}};
    }
\foreach \i in {8}{
    
    \draw [fill=lightgray] (\i+0.75,2) rectangle (\i+1.25,5);

    \draw node at (\i+1.25,-.5) {\tiny{$\partial^{u-k}P=(1^k)$}};
    
    \draw node at (\i+2.75,-1.75) {\tiny{$\underbrace{\beta \dots \beta}_{k}$}};

}
\foreach \i in {11}{

    \draw node at (\i+1,-.5) {\tiny{$\partial^{u}P=\varepsilon$}};
    
    \draw node at (\i+2,-1.5) {\tiny{$\alpha$}};

}
\draw (-3.75,-2.5) rectangle (14,5.5);
\end{tikzpicture}
    \caption{Illustration for the computation of the Burge code in Proposition~\ref{prop:generic} when $2k>r$ and $\ello'=r-k$.}
    \label{fig:Case2.1}
\end{figure}

\begin{figure}[hbt]
    \centering
    \begin{tikzpicture}[scale=0.75]
\foreach \i in {0} {
    \draw [draw=black] (\i+0,0) rectangle (\i+0.5,6);
    \draw [draw=black, fill=lightgray] (\i+0,0) rectangle (\i+0.5,2);
    
    \draw [->] (\i-0.25,2)--(\i-0.25,0.05);
    \draw node at(\i-0.25,3) [rotate=90] {\tiny{$u-r+k$}} ;
    \draw [->] (\i-0.25,4)--(\i-0.25,5.95);
    
    \draw [draw=black, fill=lightgray] (\i+1,4) rectangle (\i+1.5,6);
    \draw [->] (\i+1.75,4.5)--(\i+1.75,4.05);
    
    \draw node at (\i+1.75,5) [rotate=90]{\tiny{$u-r$}} ;
    \draw [->] (\i+1.75,5.5)--(\i+1.75,5.95); 
    \draw [draw=black] (\i+0.5,3) rectangle (\i+1,6);
    \draw [->] (\i+0.75,4)--(\i+0.75,3.05);
    \draw node at (\i+0.75,4.5) [rotate=90]{\tiny{$r-k$}} ;
    \draw [->] (\i+0.75,5)--(\i+0.75,5.95);
    
    \draw node at (\i-0.25,-.5) {\tiny{$P=(3^{u-r}, 2^{(r-k)-(u-r)}, 1^{u-r+(2k-r)})$}};
    \draw node at (\i+0,-1.5) {\tiny{Burge code:}};
    \draw node at (\i+3,-1.75) {\tiny{$\underbrace{\beta \dots  \beta}_{u-r}$}};

}
  
\foreach \i in {4} {
    \draw [draw=black] (\i+0,2) rectangle (\i+0.5,6);

    \draw [draw=black, fill=lightgray] (\i+0.5,3) rectangle (\i+1,6);
    
   \draw node at (\i+1,-.5) {\tiny{$\partial^{u-r}P=(2^{r-k},1^k)$}};
    
    \draw node at (\i+3,-1.75) {\tiny{$\underbrace{\alpha \dots \dots \alpha}_{r-k}$}};
   }

\foreach \i in {8} {
    \draw [draw=black, fill=lightgray] (\i+0,2) rectangle (\i+0.5,6);
    
    \draw node at (\i+0.75,-.5) {\tiny{$\partial^{u-k}P=(1^k)$}};
    
    \draw node at (\i+2.5,-1.75) {\tiny{$\underbrace{\beta \dots \beta}_{k}$}};  
    
}
\foreach \i in {11}{
    \draw node at (\i+1,-.5) {\tiny{$\partial^{u}P=\varepsilon$}};
    
    \draw node at (\i+2,-1.5) {\tiny{$\alpha$}}; 
    
}
\draw (-3.75,-2.5) rectangle (14,6.5);   
\end{tikzpicture}
    \caption{Illustration for computation of the Burge code in Proposition~\ref{prop:generic}} when $2k>r$ and $\ello'=u-r$.
    \label{fig:Case2.2}
\end{figure}

\begin{proof}[Proof of Theorem~\ref{thm:main}]
    By Proposition~\ref{prop:inclusion} we have that $W^{(u,u-r)}_{k,\ello}\subseteq X^{(u,u-r)}_{k,\ello}$ and by Proposition~\ref{prop:ci} $X^{(u,u-r)}_{k,\ello}$ is a reduced, irreducible complete intersection of codimension $k+\ello-2=\ell(P^{(u,u-r)}_{k,\ello})-2$. Proposition~\ref{prop:generic} shows that the closure of $W^{(u,u-r)}_{k,\ello}$ in $\mathcal N_{J_{(u,u-r)}}$ is $X^{(u,u-r)}_{k,\ello}$ when $\ello = u-r$.

    For $\ello<u-r$ we will use induction on $u-r$. The map $\Phi:\mathcal N_{J_{u+1,u+1-r}}\longrightarrow \mathcal{N}_{J_{(u,u-r)}}$ given by restriction to $\im J_{(u+1,u+1-r)}$ has four-dimensional affine fibers everywhere. By induction, we may assume that the closure of $W^{(u,u-r)}_{k,\ello}$ in $\mathcal N_{J_{(u,u-r)}}$ is $X^{(u,u-r)}_{k,\ello}$ which means that $\dim W^{(u,u-r)}_{k,\ello} = \dim X^{(u,u-r)}_{k,\ello} = 4u-4-3r-(k+\ello-2)$. We have that $\Phi^{-1}\left(W^{(u,u-r)}_{k,\ello}\right) = W^{(u+1,u+1-r)}_{k,\ello}$, which gives that
    $\dim W^{(u+1,u+1-r)}_{k,\ello} = \dim W^{(u,u-r)}_{k,\ello}+4 = 4u-3r-(k+\ello-2) = \dim X^{(u+1,u+1-r)}_{k,\ello}$. Since $X^{(u+1,u+1-r)}_{k,\ello}$ is irreducible, this implies that $X^{(u+1,u+1-r)}_{k,\ello}$ is  the closure of $W^{(u+1,u+1-r)}_{k,\ello}$ in $\mathcal N_{J_{(u+1,u+1-r)}}$.
\end{proof}
Recall that the dominance order between two partitions $P=(p_1,\ldots,p_s), p_1\ge p_2\ge \cdots \ge p_s$, and $P'=(p'_1,\ldots p'_{s'}), p'_1\ge p'_2\ge \cdots \ge p'_{s'}$, of $n=|P|=|P'|$ is $P\ge P'$ if for each $u\in [1,s], \sum_1^up_u\ge \sum_1^up'_u$.\par
In light of the Theorem just proven, we see the following
\begin{cor}\label{dominancecor} For $P_{k,\ello}, P_{k',{\ello}'} $ in $\mathcal T(Q), Q=(u,u-r)$ we have  $P_{k,\ello}\ge P_{k',{\ello}'}$ if one of the following cases hold
\begin{enumerate}
\item $k= k'$ and $ \ello \le {\ello}'$;
\item $k\le k', \ello \le {\ello}'$ and $k'+\ello\le r$.
\end{enumerate}
\end{cor}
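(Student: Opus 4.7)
The plan is to convert the dominance question on $\mathcal T(Q)$ into a containment question for the complete-intersection varieties $X^Q_{k,\ello}\subseteq \mathcal N_{J_Q}$, exploiting Theorem~\ref{thm:main} which identifies $\overline{W^Q_{k,\ello}}$ with $X^Q_{k,\ello}$.

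For the sufficiency of (1) or (2), I would verify directly from Definition~\ref{eqndef} that the hypothesis yields the variety inclusion $X^Q_{k,\ello}\supseteq X^Q_{k',\ello'}$. In case (1), with $k=k'$: the $a$-linear generators agree, the $b$-linear generators $\{b_1,\dots,b_{\ello-1}\}$ embed in those of $E^Q_{k,\ello'}$, and when $k+\ello>r$ the first $k+\ello-r$ quadratic generators of $E^Q_{k,\ello}$ form an initial segment of the $k+\ello'-r$ quadratic generators of $E^Q_{k,\ello'}$. In case (2), the hypothesis $k'+\ello\le r$ combined with $k\le k'$ forces $k+\ello\le r$, so $E^Q_{k,\ello}$ is purely linear, and each of its generators lies in the linear part of $E^Q_{k',\ello'}$ because $k-1\le k'-1$ and $\ello-1\le r-k'-1$. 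Once the variety inclusion is in place, Theorem~\ref{thm:main} converts it into $\overline{W^Q_{k,\ello}}\supseteq W^Q_{k',\ello'}$; upper semicontinuity of $\corank(B^s)$ at each limit point then gives $\corank((B')^s)\ge \corank(B^s)$ for $B'\in W^Q_{k',\ello'}$ approached by $B\in W^Q_{k,\ello}$, which is precisely the dominance inequality $P_{k,\ello}\ge P_{k',\ello'}$ on conjugate partitions.

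For the necessity direction, I would compute the generic corank sequence on $X^Q_{k,\ello}$ using Lemmas~\ref{tropicalpowerlemma} and \ref{tropicalcoranklemma}, recording that $T(M_B)$ has $(1,1)$-entry $k$ and $(2,2)$-entry $\ello'=\min(\ello,r-k)$, and compare it pointwise with the analogous sequence for $X^Q_{k',\ello'}$. Since pointwise domination of corank sequences is equivalent to $P_{k,\ello}\ge P_{k',\ello'}$, if neither (1) nor (2) holds I would produce an index $s$ at which the generic corank on $X^Q_{k,\ello}$ strictly exceeds the generic corank on $X^Q_{k',\ello'}$, giving the required contradiction. The analysis splits according to the signs of $k+\ello-r$ and $k'+\ello'-r$; in each regime the explicit piecewise-linear formulas of Lemma~\ref{tropicalpowerlemma} pin down the witnessing~$s$.

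The main obstacle will be the necessity direction, since the tropical formula for $T(M_B^s)_{11}$ changes across the boundary $k+\ello=r$ and again depending on whether $\ello'=\ello$ or $\ello'=r-k$, so several sub-regimes must be handled in turn. A potentially cleaner alternative would be to extend the three-case computation of Proposition~\ref{prop:generic} from the last column to arbitrary $\ello$, thereby obtaining an explicit description of $P_{k,\ello}$ as a partition, and then read the dominance relation directly off the parts; this would reduce the corollary to a purely combinatorial comparison of partitions indexed by $(k,\ello)$.
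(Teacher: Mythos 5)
The paper's entire proof is the one line ``immediate from Theorem~\ref{thm:main}'': the closures $\overline{W^Q_{k,\ello}}$ are the explicit complete intersections $X^Q_{k,\ello}$, and the containments $I^Q_{k,\ello}\subseteq I^Q_{k',\ello'}$ are read off Definition~\ref{eqndef} exactly as in your sufficiency paragraph. So the first half of your proposal is, in substance, the whole intended argument; your generator-by-generator check (including the observation that $b_{\ello-1}\in E^Q_{k',\ello'}$ requires $\ello-1\le r-k'-1$, i.e.\ $k'+\ello\le r$) is correct.

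The genuine gap is in your necessity direction, and it is not repairable as planned, because the statement read literally as dominance of partitions is falsified by the paper's own Example~\ref{52ex}. For $Q=(5,2)$ one has $P_{1,2}=(5,1,1)$ and $P_{2,1}=(4,2,1)$, and $(5,1,1)\ge(4,2,1)$ in the dominance order (partial sums $5,6,7$ versus $4,6,7$), yet neither condition (1) nor (2) holds for $(k,\ello)=(1,2)$, $(k',\ello')=(2,1)$ since $\ello\le\ello'$ fails. Consequently there is no index $s$ at which the generic corank on $X^Q_{1,2}$ strictly exceeds that on $X^Q_{2,1}$, and your tropical search for a witnessing $s$ must come up empty in such cases. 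The corollary is evidently meant (see the opening of Section~\ref{intersectsec}, ``determines the dominance among the loci'') as the characterization of the specialization order $\overline{W^Q_{k,\ello}}\supseteq \overline{W^Q_{k',\ello'}}$, equivalently $I^Q_{k,\ello}\subseteq I^Q_{k',\ello'}$, for which both directions are routine: sufficiency is your generator check, and necessity follows because the ideals are prime (Proposition~\ref{prop:ci}) and, e.g., $a_{k-1}\in I^Q_{k',\ello'}$ forces $k\le k'$, $b_{\ello-1}\in I^Q_{k',\ello'}$ forces $\ello\le\ello'$ and $k'+\ello\le r$ when $k+\ello\le r$, while for $k+\ello>r$ comparing the linear $b$-generators forces $k=k'$ and the number of quadrics forces $\ello\le\ello'$. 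You should either prove the corollary in that form, or note that the literal dominance formulation requires correction; your closing suggestion of computing the partitions explicitly and comparing them combinatorially would in fact expose the same discrepancy rather than resolve it.
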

\begin{proof}
    This is immediate from the Theorem \ref{thm:main} and the inclusions among the equations of loci of Remark \ref{2.5rem}.  
    \end{proof}
    That $ P^Q_{2,,2}\le P^Q_{1,2}$ in the dominance order for $Q=(5,2)$ (see Example \ref{52ex}) shows that we cannot replace ``if" by ``if and only if'' in Corollary \ref{dominancecor}. \par 
    The proof of the box conjecture for $Q$ with two parts \cite[Theorems 1.2, 3.13]{IKVZ2} listed a specific partition $P_{k,\ello}(Q)$ in $\mathcal T(Q)$: there $\mathcal T(Q)$ is arranged in A rows, and in B and C hooks. In \cite[Theorem~3.19]{IKVZ2} it was shown that the loci $\mathfrak Z^Q_{k,\ello}$ corresponding to these partitions comprise all of $\mathcal N(Q)$, so this set $\mathcal T(Q)$ is all $\mathfrak D^{-1}(Q).$ We show that the partition $P_{k,\ello}(Q)$ in $\mathcal T(Q)$ is the same as the partition $P^Q_{k,\ello}$ that we have described above using the Burge correspondence of \cite{IKM}:
    \begin{proposition}\label{equivprop}
    Let $Q=(u,u-r), r\ge 2.$   The locus $\overline{W^Q_{k,\ello}}$ of $\mathcal N(Q)$ is identical with the analogous locus $\mathfrak Z^Q_{k,\ello}$ of \cite[Theorem 3.13]{IKVZ2}.
        \end{proposition}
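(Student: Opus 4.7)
The strategy is to reduce the claim to an identification of partitions: since $W^Q_P$ is by definition the locus of matrices in $\mathcal N_{J_Q}$ with Jordan type $P$, and both $\mathcal T(Q)$ of \cite{IKVZ2} and the Burge-coded table of Equation~\eqref{Burgespecialeq} form $(r-1)\times(u-r)$ arrays of elements of $\mathfrak D^{-1}(Q)$ whose $(k,\ello)$ entry has $k+\ello$ parts, it suffices to prove that the partition $P_{k,\ello}(Q)$ described by the A-rows/B-hooks/C-hooks of \cite[Theorem 3.13]{IKVZ2} is identical to the partition $P^Q_{k,\ello}$ whose Burge code is $\alpha^{u-r-\ello}\beta^{\ello}\alpha^{r-k}\beta^{k}\alpha$. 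Granted this, the equality $\overline{W^Q_{k,\ello}} = \mathfrak Z^Q_{k,\ello}$ follows from Theorem~\ref{thm:main} together with \cite[Theorem~3.19]{IKVZ2}, both sides being the closure in $\mathcal N_{J_Q}$ of the matrices with that common Jordan type.

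The plan for identifying the two partitions is as follows. First, extract from \cite[Theorem 3.13]{IKVZ2} an explicit partition-of-$(2u-r)$ formula for $P_{k,\ello}(Q)$, organized by which of the A, B, C regions of $\mathcal T(Q)$ the position $(k,\ello)$ lies in. Second, for each region, read off the maximal almost-rectangular decomposition of $P_{k,\ello}(Q)$ and iterate the map $\partial$ using the geometric description in Figure~\ref{figure:partial}: $\partial$ strips one box from the lowest row of the largest part of each almost rectangular block, and the block's membership in $\mathcal A$ or $\mathcal B$ is recorded as the next letter of the Burge code. This reduces the verification to counting, for each region, how many $\alpha$'s and $\beta$'s appear before and after each ``collapse'' of an almost rectangular block, and comparing with the prescribed pattern $\alpha^{u-r-\ello}\beta^{\ello}\alpha^{r-k}\beta^{k}\alpha$.

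A useful shortcut is the identity, observed in the example following Definition~\ref{burgecodedef}, that the Burge code of $([m]^k)$ is $\alpha^{m-k}\beta^k\alpha$. Thus for each $(k,\ello)$ it suffices to iterate $\partial$ only until an almost rectangular partition of shape $([r]^k)$ is reached, producing the tail $\alpha^{r-k}\beta^k\alpha$ automatically, and to verify that the initial $u-r+\ello$ letters of the code are $\alpha^{u-r-\ello}\beta^{\ello}$. This is exactly what was carried out in Proposition~\ref{prop:generic} for the last column $\ello=u-r$, where the three cases depending on $2k\lessgtr r$ and $\ello'=r-k$ versus $\ello'=u-r$ are illustrated in Figures~\ref{fig:Case1}--\ref{fig:Case2.2}; the general case amounts to a similar case analysis one column at a time.

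The main obstacle is the case analysis. The partitions $P_{k,\ello}(Q)$ of \cite{IKVZ2} are described by different formulas on the A-block, B-hook, and C-hook of the table, and within the hook regions the description further bifurcates according to whether $k+\ello\le r$ or $k+\ello>r$ (the same dichotomy governing the linear-vs-quadratic split in Definition~\ref{eqndef}). Each of these subcases requires tracking the evolution of the almost rectangular decomposition through several applications of $\partial$ and confirming that the transitions between $\alpha$-runs and $\beta$-runs occur at the positions prescribed by Equation~\eqref{Burgespecialeq}. The expected alignment between the A/B/C structure on the IKVZ2 side and the $k+\ello\le r$ versus $k+\ello>r$ split on ours suggests organizing the verification in parallel to the proof of Proposition~\ref{prop:generic}, handling each block with its own picture analogous to Figures~\ref{fig:Case1}--\ref{fig:Case2.2}; the verification is lengthy but mechanical, which is why the detailed check is deferred to \cite{BIK}.
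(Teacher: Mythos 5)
Your proposal is sound and reaches the right conclusion, but it takes a genuinely different route from the paper's. The paper's (deferred) proof works at the level of defining equations: it shows directly that the equations cutting out $\mathfrak Z^Q_{k,\ello}$ in \cite[Theorem 3.13]{IKVZ2} coincide with the set $E^Q_{k,\ello}$ of Definition~\ref{eqndef}, whence $\mathfrak Z^Q_{k,\ello} = X^Q_{k,\ello} = \overline{W^Q_{k,\ello}}$ by Theorem~\ref{thm:main}. You instead work at the level of partitions: you reduce the statement to the combinatorial identity $P_{k,\ello}(Q) = P^Q_{k,\ello}$ by computing the Burge code of the explicit partition from the A-rows/B-hooks/C-hooks of \cite[Theorem 3.13]{IKVZ2}, and then identify the two loci as closures of the same Jordan stratum. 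Your reduction is legitimate provided one grants that $\mathfrak Z^Q_{k,\ello}$ is the closure of the locus of Jordan type $P_{k,\ello}(Q)$ --- which is what \cite[Theorem 3.19]{IKVZ2} supplies, as you note --- and your shortcut of iterating $\partial$ only until $([r]^k)$ is reached, so that the tail $\alpha^{r-k}\beta^k\alpha$ comes for free, correctly parallels the structure of the proof of Proposition~\ref{prop:generic}. What your route buys is that the remaining verification is purely combinatorial, with no need to re-derive the equations of \cite{IKVZ2}; what the paper's route buys is a direct match of defining data that bypasses any genericity argument. Both versions defer the same lengthy case analysis to \cite{BIK}, so yours is no less complete than the paper's own note on the proof.
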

    \begin{proof}[Note on proof] The proof depends on showing that the equations defining $\mathfrak Z^Q_{k,\ello}$ are $E^Q_{k,\ello}$ of Definition~\ref{eqndef} above. Since it is quite detailed, and not essential to our main result, we have placed it in the separate note \cite{BIK}.
    \end{proof}

\section{Remarks and Questions}\label{remarksec}
\subsection{Intersections of closures of table loci}\label{intersectsec}
\par
The Corollary \ref{dominancecor} determines the dominance among the loci $W^Q_{k,\ello}$ for stable $Q=(u,u-r), r\ge 2.$   It remains open to determine the intersections $\overline{W^Q}_{k,\ello}\cap \overline{W^Q}_{k',{\ello}'}$ in the absence of inclusion.  These need not be irreducible (Example \ref{52ex}); also for loci $W^Q_P\subset \mathcal N(Q)$ when $P\notin \mathfrak D^{-1}(Q)$ the locus may be reducible.\par
The inclusion
\begin{example}[Closures of the table loci, and their intersections for $Q=(5,2)$.]\label{52ex} Let $Q=(5,2)$, and $B=J_Q$. 
 We may observe from the equations $E(Q)$ for elements of $\mathcal T(Q)$ in Figure \ref{figeqnsloci} that specialization among table loci (i.e. inclusions $W^Q(P_{i,j})\subset \overline{W^Q(P_{k,\ell})}$) is not simply according to increasing row or column index. Let $B=J_Q, Q=(5,2).$ We have $P^Q_{1,1}=Q, P^Q_{1,2}=(5,[2]^2)=(5,1,1), P^Q_{2,1}=(4,[3]^2)=(4,2,1)$ and $P^Q_{2,2}=(4,[3]^3)=(4,1,1,1)$.
The equations in $E(Q)$ show that $\overline{W^Q_{2,1}}\supset \overline{W^Q_{2,2}}$. On the other hand
$\overline{W^Q_{1,2}}$ does not contain $\overline{W^Q_{2,1}}$ or $\overline{W^Q_{2,2}}$, even though $P_{1,2}$ is greater than both $P_{2,1}$ and $P_{2,2}$ in the dominance order! \par
In Figure \ref{figeqnsloci} the displayed entries $W^Q_P$ of $\displaystyle\mathcal W_Q $ include inequalities, and are not just the equations $E^Q_{k,\ello}.$
\par\vskip 0.2cm\par
 {\bf Note.}   We have the following two equalities in $\mathcal W_Q$:
\begin{equation}
{\overline {W^Q_{1,2}}}\cap{\overline {W^Q_{2,1}}}={\overline {W^Q_P}, P=(3,3,1)};
\end{equation}
 \vskip 0.2cm \noindent
 the matrix $A\in\mathcal U_B$ belongs to $ \overline{W^Q_{1,2}}\cap{\overline {W^Q_{2,2}}}$ if and only if $a_1=b_1=g_0h_0=0$. Moreover,
\begin{equation}\label{claim3211eq}
{\overline {W^Q_{1,2}}}\cap{\overline {W^Q_{2,2}}}\subsetneq{\overline {W^Q_P},\, P=(3,2,1,1)}.
\end{equation}
Also, $\overline {W^Q_P},\, P=(3,2,1,1)$ has at least three irreducible components.\par
These are readily shown, see \cite[Example 4.15]{IKVZ1}.
\end{example}\vskip 0.2cm

\begin{figure}[th]
\begin{equation*}
\mathcal T(Q)=\begin{array}{|c|c|}
\hline
(5,2)&(5,[2]^2)\\
\hline
(4,[3]^2)&(4,[3]^3)\\
\hline
\end{array}\, ,\quad 
 \displaystyle\mathcal W_Q =\,\begin{array}{|c|c|}
\hline
a_1b_1\not= 0&b_1=0, a_1\not= 0\\
\hline
a_1=0, g_0b_1h_0\not= 0,&a_1=a_2b_1-g_0h_0=0,\\
a_2b_1-g_0h_0\not=0& a_2\not=0, g_0b_1h_0\not=0.\\
\hline
\end{array}.
\end{equation*}
\caption{Loci $ W^Q_P$, for $P=P_{k,\ello}\in\mathcal{T}(Q), Q=(5,2)$. (Example \ref{52ex}).}\label{figeqnsloci}
\end{figure}

    \vskip 0.2cm
\subsection{Equations for Jordan type loci, for arbitrary stable partitions $Q$}\label{generalsec}\par\vskip 0.2cm
In light of the proof of the Box Theorem (see Theorem~\ref{thm:box}) in \cite{IKM}, we conjecture that Theorem~\ref{thm:main} can be generalized as follows. 

\begin{conjecture}\label{conj}
    Let $Q$ be a stable partition of length $\ell$, and let $P$ be a partition with $\mathfrak D(P) = Q$. Then the closure of $W^Q_P$ in $\mathcal N_{J_Q}$ is an irreducible complete intersection of codimension $\ell(P)-\ell$, with defining equations of degree at most $\ell$.
\end{conjecture}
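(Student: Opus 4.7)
The plan is to mimic every ingredient of the proof of Theorem~\ref{thm:main} at the $\ell$-fold level. First, for a stable $Q=(q_1,\ldots,q_\ell)$, an element $B\in \mathcal N_{J_Q}$ can be represented by an $\ell\times\ell$ matrix $M_B$ over $\kt$ generalizing \eqref{MBeqn}, with the $(i,j)$-entry lying in $t^{\max(0,q_j-q_i)}\kt$ modulo the appropriate truncation and the diagonal entries forced to have positive $t$-order by nilpotency. For each $P\in \mathfrak D^{-1}(Q)$ indexed by $I=(i_1,\ldots,i_\ell)$ with Burge code \eqref{Burgegeneraleq}, one must define a system of $\ell(P)-\ell=\sum_u(i_u-1)$ equations $E^Q_P$ as specific coefficients in the $t$-adic expansions of $k\times k$ minors of $M_B$, for $k=1,\ldots,\ell$. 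The $k=1$ equations are low-order Taylor coefficients of diagonal entries (generalizing the $a_i,b_i$ of Definition~\ref{eqndef}); for $k\ge 2$ the degree-$k$ equations arise by imposing that the $t$-order of appropriate $k\times k$ minors exceeds a threshold dictated by $I$, once the lower-degree equations are accounted for.

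Second, one must generalize Section~\ref{tropicalsec}. The idea is to extend Lemma~\ref{lemma:det} so as to relate $\corank B$ to the tropical orders of the minors of $M_B$, and to extend Lemma~\ref{tropicalpowerlemma} so as to express the order matrix $T(M_B^s)$ as a min-plus polynomial in the entries of $T(M_B)$. Because $Q$ is stable, one expects no cancellation of tropical leading terms at a generic point of the proposed zero locus $X^Q_P=V(E^Q_P)$. Given these tropical tools, the complete intersection statement (generalizing Proposition~\ref{prop:ci}) follows by solving for a distinguished ``pivot'' variable in each equation, yielding a birational parametrization of an open subset of $X^Q_P$ by an affine space of dimension $\dim\mathcal N_{J_Q}-\ell(P)+\ell$, and thereby also irreducibility. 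The inclusion $W^Q_P\subseteq X^Q_P$ (generalizing Proposition~\ref{prop:inclusion}) would then be established by induction on $q_1$: by \cite[Theorem 23]{IKM} the restriction of $B$ to $\im J_Q$ commutes with $J_{\partial Q}$ and has Jordan type $\partial P$, and the restriction of $E^Q_P$ is $E^{\partial Q}_{\partial P}$ up to boundary equations handled by the generalization of Lemma~\ref{lemma:det}.

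To close the induction, one must prove an analogue of Proposition~\ref{prop:generic} for each ``extremal'' partition $P$ in $\mathfrak D^{-1}(Q)$ (those playing the role of $\ello=u-r$ in the two-part case): a generic element of $X^Q_P$ has Jordan type exactly $P$. A dimension count then forces $\overline{W^Q_P}=X^Q_P$ for every $P\in \mathfrak D^{-1}(Q)$. The principal obstacle is precisely this generic-point computation. For $\ell\ge 3$ the matrix $M_B^s$ has $\ell^2$ entries whose tropical orders depend on $\ell-1$ independent gap parameters $q_i-q_{i+1}$, and one must recover the Burge-code-prescribed sequence of lengths $\ell(\partial^s P)$ via a case analysis that grows rapidly in $\ell$. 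Identifying a minimal correct set of defining minors, and confirming that the tropical minima are actually attained without cancellation on a dense open subset of $X^Q_P$, is the substantive combinatorial content of a proof of Conjecture~\ref{conj}.
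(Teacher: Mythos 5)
The statement you are addressing is Conjecture~\ref{conj}, which the paper explicitly leaves open for $\ell\ge 3$: the authors prove it only for $\ell=1$ (directly) and $\ell=2$ (Theorem~\ref{thm:main}), and in Section~\ref{generalsec} they sketch essentially the same program you describe — generalize the equations of Definition~\ref{eqndef}, the tropical lemmas, Proposition~\ref{prop:inclusion} with a new base case, and the ``outer layer'' analogue of Proposition~\ref{prop:generic} — while stating that carrying it out ``requires additional steps that go beyond the scope of this paper.'' So your proposal matches the authors' intended route, but it is a research plan, not a proof.

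The concrete gaps are the ones you yourself flag, and they are not routine. First, the equations $E^Q_P$ are never actually written down: saying they are ``specific coefficients in the $t$-adic expansions of $k\times k$ minors of $M_B$'' with thresholds ``dictated by $I$'' leaves unspecified exactly which minors, which coefficients, and why their number is $\sum_u(i_u-1)$; without this, neither the complete-intersection count nor the pivot-variable parametrization can be checked. Second, the tropical machinery does not extend for free: for $\ell=2$ the determinant has two monomials and Lemma~\ref{lemma:det} reduces corank to a single order computation after one row reduction, whereas for $k\times k$ minors with $k\ge 3$ the tropical order of a sum of $k!$ signed monomials can drop below the min-plus prediction even generically, and relating $\corank B^s$ to orders of minors requires a Smith-normal-form argument over $\kt$ compatible with the truncations by the $\ell$ different ideals $(t^{q_i})$. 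Third, the analogue of Proposition~\ref{prop:generic} — that a generic point of $X^Q_P$ has Jordan type exactly $P$ for the extremal $P$ — is the heart of the matter, and your proposal defers it entirely to ``a case analysis that grows rapidly in $\ell$.'' Until these three items are supplied, the dimension-count induction cannot close, and the statement remains a conjecture.
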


We first note that while Theorem~\ref{thm:main} establishes the conjecture for $\ell=2$, the conjecture is relatively straightforward for $\ell=1$. Assume that $Q=(u)$ has 1 part. In this case, $\mathcal{N}_{J_Q}$ is isomorphic to $\kk[t]/(t^u)$; indeed each element $B\in \mathcal{N}_{J_Q}$ is a matrix of the following form, which can be identified with $a(t)=\sum a_it^i\in \kk[t]/(t^n)$.
$$B=\begin{bmatrix}
    0&a_1&a_2&\ldots&a_{n-1}\\
    0&0&a_1&\ddots&a_{n-2}\\
    \vdots&\vdots&\ddots&\ddots&\vdots\\
    0&0&0&\ldots&a_1\\
    0&\ldots&\ldots&\ldots&0
\end{bmatrix}$$

Moreover, $\mathfrak{D}(P)=Q$ if and only if $P$ is an almost rectangular partition of $n$, if and only if $P$ is the Jordan type of an element $B\in \mathcal{N}_{J_{(n)}}$. Therefore, for an almost rectangular partition $P$, the closure of $W_P^Q$ in $\mathcal{N}_{J_{(n)}}$ is defined by $\ell(P)-1$ linear equations, namely $a_1=\ldots=a_{\ell(P)-1}=0$. 

The conjecture remains open for $\ell\geq 3$. We believe the inductive approach we used in proving Theorem~\ref{thm:main} can be generalized to prove Conjecture~\ref{conj}. 
The examples we have investigated seem to suggest that the conjecture holds in general but establishing it requires additional steps that go beyond the scope of this paper. Generalizing equations given in Definition~\ref{eqndef} requires more elaborate notations, but will not be fundamentally different from the case $\ell=2$. As for the proof of the conjecture in general, we need a generalized base case for the proof of the analog of Proposition~\ref{prop:inclusion}, as well as a more detailed study of 
the ``outer layer" of the Box associated to $\mathfrak{D}^{-1}(Q)$, in order to generalize Proposition~\ref{prop:inclusion}.

\thanks {\bf Acknowledgment}: 
We thank Bart Van Steirteghem, who participated in our early discussions of equations loci, as well as in \cite{IKVZ1,IKVZ2}; and Rui Zhao, who participated in our \cite{IKVZ1,IKVZ2} and in particular proposed in 2014 the existence of a rectangular table of Jordan types having $Q$ as maximum commuting nilpotent orbit - the box conjecture for $Q$ with two parts, shown in \cite{IKVZ2}. We thank the referee for their comments.  

\end{document}